\newtheorem*{theorem*}{Theorem}
\newtheorem{theorem}{Theorem}[section]
\newtheorem{lemma}[theorem]{Lemma}
\newtheorem{proposition}[theorem]{Proposition}
\newtheorem{corollary}[theorem]{Corollary} 
\theoremstyle{definition}
\newtheorem{definition}[theorem]{Definition}
\newtheorem{example}[theorem]{Example}
\newtheorem{remark}[theorem]{Remark}
\newcommand{\ZZ}{\mathbb{Z}}
\newcommand{\XX}{\mathbb{X}}
\newcommand{\YY}{\mathbb{Y}}
\DeclareMathOperator{\Aut}{Aut}
\DeclareMathOperator{\val}{val}
\DeclareMathOperator{\tr}{tr}
\newcommand{\tg}{\widetilde{g}}
\renewcommand{\oe}{\overline{e}}
\newcommand{\oh}{\overline{h}}
\newcommand{\fp}{\mathfrak{p}}
\newcommand{\fq}{\mathfrak{q}}
\title{Zeta functions of edge-free quotients of graphs}
\date{}
\author{Dmitry Zakharov}
\begin{document}

\maketitle

\begin{abstract}
We consider the Ihara zeta function $\zeta(u,X/\!\!/G)$ and Artin--Ihara $L$-function of the quotient graph of groups $X/\!\!/G$, where $G$ is a group acting on a finite graph $X$ with trivial edge stabilizers. We determine the relationship between the primes of $X$ and $X/\!\!/G$ and show that $X\to X/\!\!/G$ can be naturally viewed as an unramified Galois covering of graphs of groups. We show that the $L$-function of $X/\!\!/G$ evaluated at the regular representation is equal to $\zeta(u,X)$ and that $\zeta(u,X/\!\!/G)$ divides $\zeta(u,X)$. We derive two-term and three-term determinant formulas for the zeta and $L$-functions, and compute several examples of $L$-functions of edge-free quotients of the tetrahedron graph $K_4$.

\end{abstract}

\setcounter{tocdepth}{2}



\section{Introduction}

In~\cite{1966Ihara} Ihara associated a zeta function to a discrete torsion-free cocompact subgroup $G$ of $PGL_2(K)$, where $K$ is a non-Archimedean field with residue field $\mathbb{F}_q$, and proved that this function is the reciprocal of a polynomial. Ihara's zeta function is a $p$-adic analogue of the Selberg zeta function of a Riemann surface. It was observed by Serre (see Introduction of~\cite{1980Serre}) that Ihara's zeta function can be reconstructed from the finite $(q+1)$-regular graph $X/G$, where $X$ is the Bruhat--Tits tree of $PGL_2(K)$. Building on the work of Hashimoto~\cite{1989Hashimoto}, Bass in~\cite{1992Bass} considered the more general case of a group $G$ acting without edge inversion on a locally finite tree $X$, with finite quotient graph $X/G$ and finite stabilizers. Using the technique of non-commutative determinants, Bass associated a zeta function to the quotient $X/G$, and proved that it is the reciprocal of a polynomial.

Stark and Terras~\cite{1996StarkTerras} reformulate the theory of zeta functions of graphs, and reproved the results of Bass, in a purely elementary manner. The zeta function $\zeta(u,X)$ of a graph $X$ is defined as an Euler product over the primes of $X$, which are defined as equivalence classes of closed reduced primitive paths. Stark and Terras also introduce a number of multivariable generalizations of the zeta function that can, for example, account for arbitrary edge lengths. The zeta function $\zeta(u,X)$ can also be viewed as a graph-theoretic analogue of the Dedekind zeta function $\zeta(u,K)$ of a number field $K$, and satisfies a number of analogous properties, such as a class number formula~\cite{1998Northshield} and, in some cases, a Riemann hypothesis (see~\cite{1996StarkTerras}). 

In~\cite{2000StarkTerras}, Stark and Terras further develop the analogy between graph theory and number theory, by considering zeta functions of free Galois coverings of graphs. Given a group $G$ acting on a finite graph $Y$ without edge inversion, we can form the quotient graph $X=Y/G$ and compare the zeta functions $\zeta(u,Y)$ and $\zeta(u,X)$. Stark and Terras show that if the action of $G$ is free, then $\zeta(u,Y)$ is divisible by $\zeta(u,X)$. Specifically, for such a covering $Y/X$ they construct an $L$-function $L(u,\rho,Y/X)$ valued at a representation $\rho$ of $G$, analogous to the Artin $L$-function in number theory. Evaluating the $L$-function at respectively the right regular and trivial representations of $G$ recovers $\zeta(u,Y)$ and $\zeta(u,X)$, which proves the divisibility statement. This result is the graph-theoretic analogue of the Aramata--Brauer theorem, which states that $\zeta(u,L)/\zeta(u,K)$ is an entire function when $L/K$ is a Galois extension of a number field. In fact, $\zeta(u,Y)$ is divisible by $\zeta(u,X)$ even when $Y\to X$ is a non-normal covering of graphs, while the corresponding statement for number fields, known as Dirichlet's conjecture, remains open. 

In the monograph~\cite{2011Terras}, Terras observes that if $X=Y/G$ is the quotient under a non-free action, then $\zeta(u,X)$ does not necessarily divide $\zeta(u,Y)$, and poses the problem (see pp.~115-116 and~228 in {\it loc. cit.}) of generalizing of the results of~\cite{2000StarkTerras} to non-free quotients. In~\cite{2010MalmskogManes} Malmskog and Manes consider a group action with a single nontrivial vertex stabilizer, and obtain partial divisibility results for the corresponding zeta functions. However, to the best of the author's knowledge, this question has not been properly addressed. 

The purpose of this paper is to answer Terras's question. We consider a broader class of group actions on graphs, allowing edge inversions and nontrivial vertex stabilizers, but requiring the edge stabilizers to be trivial. Given such an action of a group $G$ on a finite graph $Y$, we form the quotient graph $X=Y/G$ by allowing legs as images of inverted edges (this procedure can technically be avoided by first subdividing the edges of $Y$ at the midpoints, but this increases the sizes of the matrices involved and complicates the calculations). We then decorate the vertices $v$ of $X$ with the stabilizer groups $X_v$ of suitably chosen preimages in $Y$. The resulting object $\XX=(X,X_v)=Y/\!\!/G$ is an example of a graph of groups with trivial edge groups.

It turns out that, to generalize the divisibility result of Stark and Terras, it is necessary to replace $\zeta(u,X)$ of the graph $X$ with an Ihara zeta function $\zeta(u,\XX)$ enumerating the primes $\fq$ in the quotient graph of groups $\XX$:
$$
\zeta(u,\XX)=\prod_{\fq}(1-u^{\ell(\fq)})^{-1}.
$$
More generally, we define an Artin--Ihara $L$-function $L(u,\rho,Y/\XX)$ valued at a representation $\rho$ of $G$ (see Def.~\ref{def:L}). We derive determinant formulas for $\zeta(u,\XX)$ and $L(u,\rho,Y/\XX)$ in terms of the adjacency and valency matrices of $X$, and the vertex stabilizers (see Thm.~\ref{thm:3term} and Thm.~\ref{thm:threetermgogL}). The function $\zeta(u,\XX)$ can also be viewed as the zeta function of a graph $X$ with vertex weights specified by the degrees of the stabilizers, and may be of independent interest; a similar function was recently defined in~\cite{2019KonnoMitsusashiMoritaSato}.

In addition, we determine the relationship between the primes of $Y$ and $\XX$ (see Thm.~\ref{thm:primes}), which turns out to be identical to the case of free Galois coverings considered in~\cite{2000StarkTerras}, and in Cor.~\ref{cor:divisibility} prove that $\zeta(u,Y)$ is divisible by $\zeta(u,\XX)$. We illustrate our results with several examples of edge-free quotients of the tetrahedron graph $K_4$, which has no nontrivial free quotients.

We note that zeta functions of graphs of groups are already present in Bass's original paper~\cite{1992Bass}, since Bass considers non-free actions. Our purpose, however, is to give an elementary, self-contained treatment of the subject, in the spirit of Stark and Terras's papers. We restrict our attention to edge-free actions purely for the sake of simplicity, and our results naturally generalize to arbitrary quotients. In a future paper, we also plan to extend our results to the various multivariable generalizations of the Ihara and Bartholdi zeta functions of a graph.

\section{Preliminaries}

In this section, we recall the basic definitions and results concerning graphs and their zeta functions, group actions on graphs, and $L$-functions of Galois coverings of graphs. The only novelty is that we consider graphs with legs. Such graphs arise in tropical geometry as the underlying combinatorial objects of tropical curves with marked points. We will see that graphs with legs are also natural from the viewpoint of graph quotients and zeta functions, and allow us to consider a broader class of group actions on graphs. 

\subsection{Graphs and their zeta functions}

A {\it graph with legs} $X$, or simply a {\it graph}, consists of the following data:

\begin{itemize} \item A set of {\it vertices} $V(X)$.

\item A set of {\it half-edges} $H(X)$.

\item A {\it root map} $r:H(X)\to V(X)$.

\item An involution $H(X)\to H(X)$ denoted $h\mapsto \oh$.

\end{itemize}

The {\it valency} of a vertex $v\in V(X)$ is $\val(v)=\#(r^{-1}(v))$. An orbit $\{h,\oh\}$ of size 2 under the involution is called an {\it edge} of $X$, and the set of edges is denoted $E(X)$. An {\it oriented edge} is an ordered pair $e=(h,\oh)$, and we call $i(e)=r(h)$ and $t(e)=r(\oh)$ respectively the {\it initial} and {\it terminal} vertices of $e$. We denote $\oe=(\oh,h)$ the edge $e$ with the opposite orientation. An edge $e=\{h,\oh\}$ is called a {\it loop} if $r(h)=r(\oh)$. A half-edge fixed by the involution is called a {\it leg} of $X$, and the set of legs is denoted $L(X)$. 

A {\it path} $P$ in $X$ of {\it length} $n=\ell(P)$ is a sequence of half-edges
$$
P=h_1h_2\cdots h_n
$$
such that $r(h_{j+1})=r(\oh_j)$ for $j=1,\ldots,n-1$. The vertices
$$
v_0=r(h_1), \, v_1=r(h_2)=r(\oh_1),\,\ldots,\,v_{n-1}=r(h_n)=r(\oh_{n-1}),\,v_n=r(\oh_n)
$$
form the {\it vertex sequence} of $P$. We call $v_0$ and $v_n$ respectively the {\it initial} and {\it terminal} vertices of $P$, and we say that $P$ is {\it closed} if $v_0=v_n$. If the terminal vertex of a path $P$ coincides with the initial vertex of a path $Q$, we can form the composite path $PQ$, in particular, we can take positive integer powers of any closed path. We consider only connected graphs. 

Each half-edge $h_j$ of $P$ is either a leg or the first half-edge of an oriented edge $e_j=(h_j,\oh_j)$. If the graph $X$ has no legs, then a path $P=h_1\cdots h_n$ is a sequence $e_1\cdots e_n$ of oriented edges $e_j=(h_j,\oh_j)$ satisfying $i(e_{j+1})=t(e_j)$. If $X$ has legs, then a path in $X$ consists of a sequence of oriented edges, possibly with legs attached at the intermediate vertices, or may even consist of a sequence of legs all attached at the same vertex. 

\begin{remark} We are primarily interested in cyclically ordered closed paths of a given length $n$, so for an index $j\in \{1,\ldots,n\}$ and an integer $n$ we usually interpret $j+n$ as the corresponding residue modulo $n$. 

\label{rem:indexing}

\end{remark}

A path $P=h_1\cdots h_n$ is called {\it reduced} if $h_{j+1}\neq \oh_j$ for all $j=1,\ldots,n$. In other words, a path is reduced if it does not contain a {\it backtrack} (two adjacent oriented edges $e_j$ and $e_{j+1}$ such that $e_{j+1}=\oe_j$) or a {\it tail} (initial and final oriented edges $e_1$ and $e_n$ such that $e_1=\oe_n$), and if no leg occurs twice in a row or as both the first and the last half-edge. In particular, a single leg does not constitute a closed reduced path. A closed reduced path $P$ is called {\it primitive} if there does not exist a path $Q$ such that $P=Q^k$ for some integer $k\geq 2$. Two closed paths $P=h_1\cdots h_n$ and $P'=h'_1\cdots h'_n$ of the same length are {\it equivalent} if they differ by a choice of initial vertex, in other words if there exists an integer $0\leq m\leq n-1$ such that $h'_j=h_{j+m}$ for all $j\in \{1,\ldots,n\}$.

A {\it prime} $\fp$ of $X$ is an equivalence class of primitive paths and has a well-defined length $\ell(\fp)$. We note that a primitive path and the same path traversed in the opposite direction represent different primes of $X$. The {\it Ihara zeta function} $\zeta(u,X)$ is defined as the product
$$
\zeta(u,X)=\prod_{\fp} (1-u^{\ell(\fp)})^{-1}
$$
taken over all primes of $X$, where $u$ is a complex variable. This product is usually infinite, and converges for sufficiently small $u$. 

The Ihara {\it three-term determinant formula} (see~\cite{1992Bass}, \cite{1996StarkTerras}, or~\cite{2011Terras}) expresses the reciprocal of the zeta function $\zeta(u,X)$ of a graph $X$ without legs as an explicit polynomial in $u$. Let $V(X)=\{v_1,\ldots,v_n\}$ be the vertices of $X$, and let $m$ be the number of edges, so that $\#(H(X))=2m$. Let $A$ and $Q$ be the $n\times n$ {\it adjacency} and {\it valency} matrices of $X$, defined as
$$
A_{jk}=\#\{e\in E(X):i(e)=v_j,t(e)=v_k\},\quad Q_{jk}=\delta_{jk}\val v_j.
$$
Let $b_1(X)=m-n+1$ be the first Betti number of $X$. Then 
\begin{equation}
\zeta(u,X)^{-1}=(1-u^2)^{b_1(X)-1}\det (I_n-Au+(Q-I_n)u^2).
\label{eq:3term}
\end{equation}

We consider several examples.

\begin{example} Let $F_m$ be the graph consisting of a single vertex and $m$ loops. Then $b_1=m$, $A=(2m)$, $Q=(2m)$, therefore
$$
\zeta(u,F_m)^{-1}=(1-u^2)^{m-1}(1-2mu+(2m-1)u^2)=(1-u^2)^{m-1}(1-u)(1-(2m-1)u).
$$
\label{ex:Fm}
\end{example}

\begin{example} Let $L_m$ be the graph consisting of two vertices joined by $m$ edges. Then
$$
b_1=m-1,\quad A=\left[\begin{array}{cc} 0 & m \\ m & 0 \end{array}\right],\quad 
Q=\left[\begin{array}{cc} m & 0 \\ 0 & m \end{array}\right],
$$
therefore
$$
\zeta(u,L_m)^{-1}=(1-u^2)^{m-2}\left|\begin{array}{cc} 1 + (m-1)u^2 & -mu \\ -mu & 1 + (m-1)u^2 \end{array}\right|=
(1-u^2)^{m-1}(1-(m-1)^2u^2).
$$
\label{ex:Lm}
\end{example}

\begin{example} For our last example, we work out by hand the zeta function of the graph $X$ consisting of a single vertex and two legs $l_1$ and $l_2$. A path of length $n$ in $X$ is a sequence $l_{j_1}l_{j_2}\cdots l_{j_n}$, with $j_k\in \{1,2\}$. Such a path is always closed, and is reduced if $j_k\neq j_{k+1}$ for all $k$. 
Therefore the only reduced paths in $X$ have even length and are of the form $l_1l_2\cdots l_1l_2$ and $l_2l_1\cdots l_2l_1$. Hence $X$ has two primitive paths $l_1l_2$ and $l_2l_1$ representing a single prime of length 2, and thus
$$
\zeta(u,X)^{-1}=1-u^2.
$$
\label{ex:2legs}
\end{example}

\subsection{Group actions on graphs}

Let $X$ be a graph. An {\it automorphism} $f$ of $X$ is a pair of bijections $V(X)\to V(X)$ and $H(X)\to H(X)$ that are equivariant with respect to the root map and the involution. An automorphism maps edges to edges and legs to legs. An {\it action} of a group $G$ on a graph $X$ is a homomorphism from $G$ to the automorphism group of $X$. 

We are primarily interested in two kinds of group actions on a graph. We say that a $G$-action on $X$ is {\it edge-free} if $G$ acts freely on the set of half-edges $H(X)$. We say that a $G$-action is {\it free} if $G$ acts freely on $V(X)$ and $H(X)$, and if in addition $G$ does not flip edges, in other words if $gh\neq \oh$ for any edge $e=\{h,\oh\}$ and any $g\in G$. We note that this condition is necessary to define a graph quotient $X/G$ when using only edges, and is often part of the definition of a group action on a graph. Our use of legs allows us to relax this condition. 

We note that free actions are called {\it unramified} in~\cite{2011Terras}. We avoid this terminology, because this term has an entirely different meaning in tropical geometry, and because, as we will see in Thm.~\ref{thm:primes}, the behavior of primes in quotients by edge-free actions qualifies the latter as unramified coverings. We also note that edge-free actions are called {\it harmonic} in~\cite{2015Corry} (see Def.~6 and Prop.~7), another term that we avoid because of its use in tropical geometry.

\begin{definition} Given an action of $G$ on $X$, we form the {\it quotient graph} $X/G$ by setting $V(X/G)=V(X)/ G$ and $H(X/G)=H(X)/ G$ and descending the root and involution maps. We denote by $\pi:X\to X/G$ the natural quotient map.
\label{def:quotient}
\end{definition}
Working out the details, we see that:

\begin{itemize} \item If $h=\oh$ is a leg of $X$, then $\pi(h)$ is a leg of $X/G$, rooted at $\pi(r(h))$.

\item If $e=\{h,\oh\}$ is an edge of $X$, then there are two possibilities. There may exist an element $g\in G$ (unique of order 2 if the action is edge-free) flipping $e$, so that $g(h)=\oh$. In this case $e$ maps to a leg $\pi(h)=\pi(\oh)$ of $X/G$ (note that in this case $g(r(h))=g(r(\oh))$ and hence $\pi(r(h))=\pi(r(\oh))$). If such an element does not exist, then $\pi(h)\neq \pi(\oh)$ and $\pi(e)=\{\pi(h),\pi(\oh)\}$ is an edge of $X/G$.

\end{itemize}

We call the map $\pi:X\to X/G$ respectively a {\it free Galois covering} or {\it edge-free Galois covering} if the action of $G$ is free or edge-free. A free Galois covering of graphs is a Galois covering in the topological sense, and vice versa.

\begin{example} To illustrate our definition of a quotient, consider the graph $L_2$ consisting of two vertices joined by two edges:
\begin{center}
\begin{tikzpicture}
\draw [ultra thick] (0,0) .. controls (0.5,0.3) and (1,0.3) .. (1.5,0);
\draw [ultra thick] (0,0) .. controls (0.5,-0.3) and (1,-0.3) .. (1.5,0);
\draw[fill](0,0) circle(1mm);
\draw[fill](1.5,0) circle(1mm);
\end{tikzpicture}
\end{center}
The automorphism group of $L_2$ is the Klein $4$-group $\Aut(L_2)=\{e,g_1,g_2,g_1g_2\}$, where $g_1$, $g_2$, and $g_1g_2$ act respectively by a rotation by $\pi$, a reflection about the vertical axis, and a reflection about the horizontal axis. The action of $\Aut(L_2)$ and of all of its subgroups on $L_2$ is edge-free. The following diagram shows the quotient graphs $L_2/G$ for the four nontrivial subgroups $G\subset \Aut(L_2)$:
\begin{center}
\begin{tikzcd}
&&
\begin{tikzpicture}
\draw [ultra thick] (0,0) -- (1.5,0);
\draw[fill](0,0) circle(1mm) node[left]{$\ZZ/2\ZZ$};
\draw[fill](1.5,0) circle(1mm) node[right]{$\ZZ/2\ZZ$};
\end{tikzpicture}
&& \\
\begin{tikzpicture}
\draw [ultra thick] (0,0) .. controls (0.5,0.3) and (0.6,0.1) .. (0.6,0);
\draw [ultra thick] (0,0) .. controls (0.5,-0.3) and (0.6,-0.1) .. (0.6,0);
\draw[fill](0,0) circle(1mm);
\end{tikzpicture} 
&& 
\begin{tikzpicture}
\draw [ultra thick] (0,0) .. controls (0.5,0.3) and (1,0.3) .. (1.5,0);
\draw [ultra thick] (0,0) .. controls (0.5,-0.3) and (1,-0.3) .. (1.5,0);
\draw[fill](0,0) circle(1mm);
\draw[fill](1.5,0) circle(1mm);
\end{tikzpicture}
\arrow{ll}[swap]{\{e,g_1\}} \arrow{d}{\Aut(L_2)} \arrow{rr}{\{e,g_2\}} \arrow{u}[swap]{\{e,g_1g_2\}}
&&
\begin{tikzpicture}
\draw [ultra thick] (0,0) -- (-0.7,0);
\draw [ultra thick] (0,0) -- (0.7,0);
\draw[fill](0,0) circle(1mm);
\end{tikzpicture}
 \\
&&
\begin{tikzpicture}
\draw [ultra thick] (0,0) -- (0.7,0);
\draw[fill](0,0) circle(1mm) node[left]{$\ZZ/2\ZZ$};
\end{tikzpicture}
&& 
\end{tikzcd}
\end{center}
In this diagram, vertices are marked by a bold dot, so a line segment with no dot at the end represents a leg. We mark the nontrivial stabilizer groups of the vertices. 

As a preview of our results, we compare the zeta functions of all graphs in this diagram. The zeta function on $L_2$ is computed in Ex.~\ref{ex:Lm} and is equal to $\zeta(u,L_2)^{-1}=(1-u^2)^2$. The left quotient is $F_1$ and has $\zeta(u,F_1)^{-1}=(1-u)^2$ by Ex.~\ref{ex:Fm}. The right quotient is the graph of Ex.~\ref{ex:2legs} and the reciprocal of its zeta function is $1-u^2$. Finally, the reciprocals of the zeta functions of the top and bottom quotients, viewed as graphs of groups, are computed in Ex.~\ref{ex:2examples} and are equal to $1-u^2$ and $1-u$, respectively. We observe that the zeta functions of the quotients all divide the zeta function of $L_2$, which is an example of Cor.~\ref{cor:divisibility}.

\end{example}

\subsection{Artin $L$-function of a free Galois covering of graphs}

Finally, we review Artin $L$-functions of free Galois coverings (see~\cite{2000StarkTerras} or Part IV of~\cite{2011Terras}). Let $\pi:Y\to X$ be a free Galois covering of graphs without legs with Galois group $G$ of degree $d=\#(G)$. Let $\fp$ be a prime of $Y$ with representative $P$. The projection $\pi(P)$ is a closed path in $X$, hence $\pi(P)=Q^f$ for a unique primitive path $Q$. The equivalence class of $Q$ does not depend on the choice of $P$, and we say that $\fp$ {\it lies over} $\fq=[Q]$ with {\it residual degree} $f=f(\fp,X/Y)$. The Galois group $G$ acts transitively on the primes $\fp_1,\ldots,\fp_g$ of $Y$ lying over a given prime $\fq$ of $X$, these primes have the same residual degree, denoted $f(\fq,Y/X)$, and the number of primes over $\fq$ is $g=g(\fq,Y/X)=d/f(\fq,Y/X)$. 

Let $\fq$ be a prime of $X$, and let $Q$ be a primitive path in $X$ representing $\fq$ and starting at a vertex $u\in V(X)$. Choose a vertex $v\in V(Y)$ so that $\pi(v)=u$, then the path $Q$ lifts to a unique (not necessarily closed) path $P$ in $Y$ with initial vertex $v$ and terminal vertex lying over $u$. The {\it Frobenius automorphism} $F(Q,Y/X)\in G$ is the unique element of $G$ mapping $v$ to the terminal vertex of $P$. If $Q'$ is a different representative of $\fq$, then $F(Q,Y/X)$ and $F(Q',Y/X)$ are conjugate.

We now define the Artin--Ihara $L$-function of a free Galois covering of graphs. 

\begin{definition} Let $\pi:Y\to X$ be a free Galois covering with Galois group $G$, and let $\rho$ be a representation of $G$. We define the {\it Artin--Ihara $L$-function} as the product
$$
L(u,\rho,Y/X)=\prod_{\fq} \det(1-\rho(F(Q,Y/X))u^{\ell(\fp)})^{-1}
$$
taken over the primes $\fq$ of $X$, where $Q$ is any representative of $\fq$.
\label{def:L}
\end{definition}

The Frobenius automorphism $F(Q,Y/X)$ depends on the choice of $Q$ only up to conjugacy, hence the determinant is well-defined. As with the zeta function, this product converges for sufficiently small $u$. 

Let $\rho_1$ and $\rho_2$ be representations of $G$, then it is clear that
$$
L(u,\rho_1\oplus\rho_2,Y/X)=L(u,\rho_1,Y/X)L(u,\rho_2,Y/X).
$$
Furthermore, it is clear that the $L$-function evaluated at the trivial representation $1_G$ recovers the zeta function of $X$:
$$
L(u,1_G,Y/X)=\zeta(u,X).
$$
A deeper result is that the $L$-function evaluated at the right regular representation $\rho_G$ is the zeta function of $Y$:
\begin{equation}
L(u,\rho_G,Y/X)=\zeta(u,Y).
\label{eq:Linduction}
\end{equation}
Since the trivial representation is a summand in the regular representation, it follows that $\zeta(u,X)$ divides $\zeta(u,Y)$, which is in fact true for non-normal free coverings $Y\to X$ as well (see. Prop.~13.10 in~\cite{2011Terras}).

\section{The zeta function of an edge-trivial graph of groups}

In this section, we define the zeta function of the quotient graph of groups of an edge-free group action on a finite graph. Our primary reference for graphs of groups is~\cite{1993Bass}. By definition, the edge stabilizers and therefore the Bass--Serre relations are trivial, hence we do not deploy Bass--Serre theory in its full generality. On the other hand, we consider the more general situation of graphs with legs, so we are not able to simply cite the relevant results of~\cite{1993Bass}. 

\subsection{Definitions}

\begin{definition} An {\it edge-trivial graph of groups} $\XX$ consists of a graph $X$ and groups $X_v$ for all vertices $v\in V(X)$. We say that $\XX$ is {\it finite} if $X$ is a finite graph, and each $X_v$ is a finite group. 
\end{definition}

We now define paths, closed paths, primes, and the zeta function of a finite edge-trivial graph of groups $\XX$. A {\it path} $P$ in $\XX$ of {\it length} $n=\ell(P)$ is a sequence 
$$
P=g_0h_1g_1h_2\cdots g_{n-1}h_n
$$
where $h_1\cdots h_n$ is a path in $X$ with vertex sequence $v_0=r(h_1)$, $v_j=r(h_{j+1})=r(\oh_j)$ for $j=1,\ldots,n-1$, and $v_n=r(\oh_n)$, and $g_j$ are elements of $X_{v_j}$ for $j=0,\ldots,n-1$. We say that $P$ is {\it closed} if $v_n=v_0$, in other words if the underlying path in $X$ is closed. If the terminal vertex of a path $P$ is equal to the initial vertex of $Q$, we can define the path $PQ$ by concatenation, and we can define positive integer powers of closed paths.

We say that a path $P=g_0h_1g_1\cdots g_{n-1}h_n$ is {\it reduced} if for all $j\in \{1,\ldots,n\}$ either $h_{j+1}\neq \oh_j$ or the element $g_j$ is not the identity. A closed reduced path $P$ is called {\it primitive} if there does not exist a path $Q$ such that $P=Q^k$ for some $k\geq 2$, and two closed paths $P=g_0h_1\cdots g_{n-1} h_n$ and $P'=g'_0h'_1\cdots g'_{n-1}h'_n$ of the same length are {\it equivalent} if $g'_{j-1}=g_{j-1+m}$ and $h'_j=h_{j+m}$ for some $0\leq m\leq n-1$ and all $j\in \{1,\ldots,n\}$. 

A {\it prime} $\fp$ of $\XX$ is an equivalence class of primitive paths and has a well-defined length $\ell(\fp)$. The length does not depend on the group elements, but if $\XX$ is finite, then there are finitely many primes of a given length. We can therefore define the zeta function of a finite graph of groups $\XX$.

\begin{definition} Let $\XX$ be a finite edge-trivial graph of groups. The {\it Ihara zeta function} $\zeta(u,\XX)$ is the product
$$
\zeta(u,\XX)=\prod_{\fp}(1-u^{\ell(\fp)})^{-1}
$$
over all primes $\fp$ of $\XX$.
\label{def:zeta}
\end{definition}

As with graphs, this product is usually infinite, and converges for sufficiently small $u$, and is in fact the reciprocal of a polynomial in $u$ (see Thm.~\ref{thm:3term}). 

We observe that $\zeta(u,\XX)$ only depends on the orders of the groups $X_v$ and not on their structure, so it is not clear how this zeta function may be related to the zeta function enumerating the normal subgroups of a group. 

\begin{remark} Consider a graph of groups $\XX=(X,X_v)$ having a leg $l$ rooted at $v\in V(X)$. Let $\YY$ be the graph of groups obtained from $\XX$ by replacing $l$ with a directed edge $e$ starting at $v$ and ending at a new vertex $u$, with an order two group $X_u=\{e,g\}$ at $u$. Then there is a natural bijection between the closed reduced paths of $\XX$ and of $\YY$, which consists in replacing each instance of $l$ by $eg\oe$. This operation also preserves path length, if we stipulate that the edge $e$ has length $1/2$. Hence a leg in a graph of groups can be viewed as an extremal edge of length $1/2$ having a stabilizer of order $2$ at the extremal vertex. 

\end{remark}

Before we proceed, we calculate two simple examples by hand. 

\begin{example} Let $\XX$ be the graph of groups consisting of a single vertex with a group $\{1,g\}$ of order $2$, and a leg $l$. A path in $\XX$ of length $n$ has the form $g_0lg_1l\cdots g_{n-1}l$, with $g_j\in\{1,g\}$. Such a path is always closed, and is reduced if and only if each $g_j$ is the nontrivial element $g$. Hence $\XX$ has a unique prime $[gl]$ of length one, and
$$
\zeta(u,\XX)^{-1}=1-u.
$$
Similarly, consider the graph of groups $\YY$ consisting of two vertices with groups $\{1,g\}$ of order two at each vertex, joined by a single edge $e$. Similarly to the case above, it is clear that $\YY$ has a unique prime $[geg\oe]$ of length two, and therefore
$$
\zeta(u,\YY)^{-1}=1-u^2.
$$
We observe that $\zeta(u,\YY)$ is obtained from $\zeta(u,\XX)$ by replacing $u$ with $u^2$, which agrees with the previous remark, since $\XX$ can be viewed as $\YY$ with an edge length of $1/2$.
\label{ex:2examples}
\end{example}

Finally, we give an informal definition of the graph of groups of an edge-free quotient, which we will use in Ex.~\ref{ex:K4} in the next section.

\begin{example} Let $Y$ be a graph with an edge-free action of a group $G$, and let $X=Y/G$ be the quotient graph. We define the structure of an edge-trivial graph of groups $\XX$ on $X$ by choosing $X_v$ for $v\in V(X)$ to be the stabilizer of any preimage of $v$. In the next section we will describe exactly how we choose these stabilizer subgroups (see Def.~\ref{def:edgefreequotient}), for now we note that the order of $X_v$ does not depend on the choice of preimage of $v$, and hence $\zeta(u,\XX)$ is well-defined.
\end{example}

\subsection{Determinant formulas}

We now prove analogues of the two-term and three-term determinant formulas for the zeta function of a finite edge-trivial graph of groups.

Let $\XX=(X,X_v)$ be a finite edge-trivial graph of groups. Let $k=2m+l$ denote the number of half-edges of $X$, where $m$ and $l$ are respectively the number of edges and legs. The {\it charge} $c(v)$ of a vertex $v\in V(X)$ is the order of the group at $v$:
$$
c(v)=\# (X_v).
$$
We define the $k\times k$ {\it half-edge adjacency matrix} $W$ of $\XX$ as follows:
$$
W_{hh'}=\left\{\begin{array}{cc} 
c(r(h'))-1, & h'=\oh, \\
c(r(h')), & h'\neq \oh\mbox{ and }r(\oh)=r(h'),\\
0, & r(\oh)\neq r(h').
\end{array}\right.
$$
We observe that $W$ is related to the edge matrix of a graph (see Definition~11.2 in \cite{2011Terras}). However, our $W$ cannot be simply computed from the edge matrix by a weight specialization, since the entries $W_{hh'}$ may be nonzero when $h'=\oh$. In other words, we cannot count the primes of $\XX$ simply by looking at the primes of $X$, since a reduced path in a graph of groups may correspond to a non-reduced path in the underlying graph.

\begin{proposition} Let $\XX$ be a finite edge-trivial graph of groups. The zeta function of $\XX$ is equal to
\begin{equation}
\zeta(u,\XX)^{-1}=\det(I_k-Wu).
\label{eq:twoterm}
\end{equation}
\label{prop:twoterm}
\end{proposition}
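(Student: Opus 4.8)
The plan is to adapt the classical proof of the two-term (edge-zeta) determinant formula, with the role of the edge matrix played by $W$ and the vertex groups accounted for by the entries $W_{hh'}$. Throughout I work with formal power series in $u$ (equivalently, for $|u|$ small enough that $\|Wu\|<1$), so that $\log\det$ and $\operatorname{tr}\log$ make sense termwise.

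\emph{Step 1: the combinatorial meaning of $\operatorname{tr}(W^n)$.} First I would show that $N_n := \operatorname{tr}(W^n) = \sum_{h_1,\dots,h_n} W_{h_1h_2}W_{h_2h_3}\cdots W_{h_nh_1}$ equals the number of closed reduced paths of length $n$ in $\XX$. The key observation is that $W_{hh'}$ counts exactly the admissible ways to continue a path after traversing $h$: having arrived at the vertex $w=r(\oh)$, one must choose a group element $g\in X_w$ and then a half-edge $h'$ rooted at $w$, and reducedness forbids precisely the simultaneous choice $h'=\oh$ and $g=1$. This gives $c(w)-1$ admissible $g$ when $h'=\oh$ and $c(w)$ admissible $g$ when $h'\ne\oh$ (and none when $r(h')\ne w$), matching the three cases in the definition of $W$, including the leg case $\oh=h$. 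Consequently a nonzero summand $W_{h_1h_2}\cdots W_{h_nh_1}$ forces $h_1\cdots h_n$ to be a closed path in $X$, and that summand counts the tuples $(g_1,\dots,g_n)$ with $g_j\in X_{r(\oh_j)}$ for which $g_nh_1g_1h_2\cdots g_{n-1}h_n$ is a (cyclically) reduced closed path in $\XX$; relabelling $g_n=g_0$ identifies these with the closed reduced paths of length $n$ of Definition~\ref{def:zeta}. The one delicate point here is the wrap-around junction between $h_n$ and $h_1$, which is governed by $g_n=g_0$ — this is exactly what the trace (as opposed to a single entry of $W^n$) records.

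\emph{Step 2: prime decomposition.} Let $\pi_{\XX}(d)$ denote the number of primes of $\XX$ of length $d$. I would next establish $N_n=\sum_{d\mid n} d\,\pi_{\XX}(d)$. Since "reduced" for a closed path is a cyclic condition on $(h_j,g_j)$, a cyclic shift of a closed reduced path is again reduced, and so is every positive power of a closed reduced path. Hence each closed reduced path $P$ of length $n$ can be written uniquely as $Q^{n/d}$ with $Q$ a primitive closed reduced path of some length $d\mid n$, and the prime $\fp=[Q]$ has exactly $d$ distinct cyclic representatives (distinct because $Q$ is primitive). Conversely, for a prime $\fp$ of length $d\mid n$, raising each of its $d$ representatives to the power $n/d$ yields $d$ distinct closed reduced paths of length $n$, and these are all the paths of length $n$ whose associated prime is $\fp$. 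Summing over $\fp$ gives the stated count.

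\emph{Step 3: the logarithmic manipulation.} Finally I would combine
\[
\log\det(I_k-Wu)^{-1}=\operatorname{tr}\sum_{n\ge1}\frac{(Wu)^n}{n}=\sum_{n\ge1}\frac{N_n}{n}\,u^n
\]
with the Euler product expansion
\[
\log\zeta(u,\XX)=-\sum_{\fp}\log\bigl(1-u^{\ell(\fp)}\bigr)=\sum_{\fp}\sum_{s\ge1}\frac{u^{s\ell(\fp)}}{s}=\sum_{d\ge1}\pi_{\XX}(d)\sum_{s\ge1}\frac{u^{sd}}{s}=\sum_{n\ge1}\frac{u^n}{n}\sum_{d\mid n}d\,\pi_{\XX}(d).
\]
By Step~2 the two power series coincide, so $\log\zeta(u,\XX)=\log\det(I_k-Wu)^{-1}$, and exponentiating gives $\zeta(u,\XX)^{-1}=\det(I_k-Wu)$, as claimed.

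The main obstacle is entirely in Step~1: one must carefully match the three-case definition of $W$ — in particular the $c(r(h'))-1$ appearing in the backtracking case $h'=\oh$ and the treatment of legs $\oh=h$ — to the reducedness condition at each junction of a cyclic path in $\XX$, and verify that the trace correctly encodes the wrap-around group element $g_0$. Steps~2 and~3 are then routine and parallel the graph case verbatim.
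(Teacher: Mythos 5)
Your proposal is correct and follows essentially the same route as the paper: both arguments hinge on the combinatorial identity $\operatorname{tr}(W^n)=$ (number of closed reduced paths of length $n$ in $\XX$), established exactly as you do in Step~1 by matching the three-case definition of $W_{hh'}$ to the number of admissible group elements $g_j$ at each junction (with the wrap-around factor $W_{h_nh_1}$ recording $g_0$), and both then pass between primes, primitive paths, and closed reduced paths via the logarithm. The paper simply folds your Steps~2 and~3 into a single chain of equalities rather than isolating the count $N_n=\sum_{d\mid n}d\,\pi_{\XX}(d)$ as a separate lemma.
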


\begin{proof} The proposition and its proof elaborate on the corresponding result for zeta functions of graphs (see Thm.~11.4 and Cor.~11.5 in~\cite{2011Terras}). The logarithm of $\zeta(u,\XX)$ is equal to a sum over all primes $\fp$ of $\XX$:
$$
\log \zeta(u,\XX)=\sum_{\fp}\sum_{j=1}^{\infty} \frac{1}{j} u^{j\ell(\fp)}.
$$
A prime $\fp$ of length $n=\ell(\fp)$ is represented by exactly $n$ distinct primitive paths, and every closed reduced path is a unique power of a primitive path, hence we can change to a sum over all primitive paths $P$, and then to a sum over all closed reduced paths $Q$:
$$
\log \zeta(u,\XX)=\sum_{P}\sum_{j=1}^{\infty}\frac{1}{j\ell(P)}u^{j\ell(P)}=
\sum_{P}\sum_{j=1}^{\infty}\frac{1}{\ell(P^j)}u^{\ell(P^j)}=
\sum_{Q}\frac{1}{\ell(Q)}u^{\ell(Q)}=\sum_{n=1}^{\infty}\frac{C_n}{n}u^n,
$$
where $C_n$ is the number of closed reduced paths in $\XX$ of length $n$.

A closed reduced path $Q$ in $\XX$ consists of a path $h_1\cdots h_n$ in $X$ (closed but not necessarily reduced) with vertex sequence $v_j=r(h_{j+1})=r(\oh_j)$, and a choice of elements $g_j\in X_{v_j}$ for $j=0,\ldots,n-1$, where $g_j$ is required to be nontrivial if $h_{j+1}=\oh_j$. Hence the number of choices of $g_j$ is equal to $W_{h_{j}h_{j+1}}$, and the total number of closed reduced paths in $\XX$ with underlying closed path $h_1\cdots h_n$ is equal to $W_{h_nh_1}W_{h_1h_2}\cdots W_{h_{n-1}h_n}$. The latter product vanishes when the path $h_1\cdots h_n$ is not closed, so summing over all possible sequences of $n$ half-edges we get that $C_n=\tr W^n$. Therefore
$$
\log \zeta(u,\XX)=\sum_{n=1}^{\infty}\frac{\tr W^n}{n}u^n=\tr\sum_{n=1}^{\infty}\frac{1}{n} (Wu)^n=
-\tr\log(I_k-Wu)=\log \det (I_k-Wu)^{-1},
$$
which completes the proof.

\end{proof}

We now derive a three-term determinant formula for $\zeta(u,\XX)$, which expresses the zeta function in terms of matrices that are defined in terms of the vertices of $\XX$. We consider the following $n\times n$ matrices, where $n=\#(V(X))$ is the number of vertices:

\begin{enumerate}

\item $A$ is the {\it adjacency matrix}, defined for a graph with legs as follows:
$$
(A)_{uv}=\#\{h\in H(X):r(h)=u,r(\oh)=v\}=\left\{\begin{array}{cc} 
\#\,\{\mbox{edges from }u\mbox{ to }v\}, & u\neq v,\\
2\cdot \#\,\{\mbox{loops at }u\}+\#\,\{\mbox{legs at }u\}, & u=v.
\end{array}
\right.
$$

\item $Q$ is the diagonal {\it valency matrix}, with entries $(Q)_{uv}=\delta_{uv} \val(v)$.

\item $C$ is the diagonal {\it charge matrix}, with entries $(C)_{uv}=\delta_{uv} c(v)$.

\end{enumerate}

\begin{theorem} Let $\XX$ be a finite edge-trivial graph of groups. The zeta function of $\XX$ is equal to
$$
\zeta(u,\XX)^{-1}=(1-u^2)^{b_1(X)-1}(1+u)^l\det (I_n-CAu+(CQ-I_n)u^2),
$$
where $b_1(X)=m-n+1$ is the first Betti number of $X$ and $l$ is the number of legs of $X$.
\label{thm:3term}
\end{theorem}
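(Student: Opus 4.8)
The plan is to reduce the three-term formula to the two-term formula of Proposition~\ref{prop:twoterm} by a block manipulation of the determinant $\det(I_k - Wu)$, following the classical derivation of~\eqref{eq:3term} but keeping careful track of the charges $c(v)$ and the contribution of the legs. First I would organize the $k = 2m+l$ half-edges into the $2m$ half-edges belonging to edges and the $l$ legs, and write $W$ in the corresponding block form. Introduce the $k \times n$ ``source'' and ``target'' incidence matrices $S$ and $T$ with $S_{h,v} = \delta_{r(h),v}$ and $T_{h,v} = \delta_{r(\oh),v}$, and the $k\times k$ involution matrix $J$ with $J_{h,h'} = \delta_{h',\oh}$ (so $J$ fixes every leg). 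The key observation is that $W$ decomposes as
\begin{equation}
W = T \, C \, S^{\mathsf T} - J,
\label{eq:Wdecomp}
\end{equation}
where $C$ is the charge matrix: indeed $(TCS^{\mathsf T})_{hh'} = c(r(\oh))\,\delta_{r(\oh),r(h')}$ counts $c(r(h'))$ whenever $r(\oh)=r(h')$ (covering both the case $h'=\oh$, where we then subtract $1$ via $-J$, and the case $h'\neq\oh$, where $J_{hh'}=0$), and gives $0$ otherwise. This exactly matches the three cases in the definition of $W$.

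Next I would run the standard determinant identity $\det(I_k - Wu) = \det(I_k + Ju - TCS^{\mathsf T}u)$ through the formula $\det(I - MN) = \det(I - NM)$. Write $I_k + Ju = (I_k+Ju)$ and factor: since $J$ is an involution on the edge-half-edges and the identity on legs, $I_k + Ju$ is block-diagonal with $l$ blocks $(1+u)$ coming from the legs and $m$ blocks $\left[\begin{smallmatrix} 1 & u \\ u & 1\end{smallmatrix}\right]$ coming from the edges; hence $\det(I_k+Ju) = (1+u)^l(1-u^2)^m$, and $(I_k+Ju)^{-1}$ exists for generic $u$. Then
\begin{equation}
\det(I_k - Wu) = \det(I_k+Ju)\,\det\!\big(I_n - u\, S^{\mathsf T}(I_k+Ju)^{-1} T \, C\big).
\label{eq:pushdown}
\end{equation}
The remaining task is to compute the $n\times n$ matrix $S^{\mathsf T}(I_k+Ju)^{-1}T$. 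Using the block structure of $(I_k+Ju)^{-1}$ — legs contribute $(1+u)^{-1}$, edges contribute $\frac{1}{1-u^2}\left[\begin{smallmatrix} 1 & -u \\ -u & 1\end{smallmatrix}\right]$ — one computes $S^{\mathsf T} T$-type sums over half-edges and gets something of the shape $\frac{1}{1-u^2}(A - uQ) + (\text{leg correction})$. I expect the leg terms to combine so that, after multiplying~\eqref{eq:pushdown} out and clearing denominators by $(1-u^2)^n$, the factor $(1+u)^l$ survives with the right exponent and one is left with $\det\big((1-u^2)I_n - u(A-uQ)C\big)$ up to the global power of $(1-u^2)$; tracking exponents gives $m - n = b_1(X) - 1$ for the $(1-u^2)$ power, matching the claim. (The transpose placement of $C$ is cosmetic: $\det(I_n - u(\cdots)C) = \det(I_n - uC(\cdots))$ after a similarity, producing $CA$ and $CQ$ as in the statement.)

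The main obstacle I anticipate is the bookkeeping of the leg contributions in the computation of $S^{\mathsf T}(I_k+Ju)^{-1}T$ and in the exponent of $(1+u)$: a leg $l$ rooted at $v$ contributes to $(A)_{vv}$ via the diagonal convention for legs, contributes to $\val(v)$ hence to $(Q)_{vv}$, and contributes a factor $(1+u)^{-1}$ to the inverse, and one must check these three effects assemble precisely into the stated $(1+u)^l$ together with the $I_n - CAu + (CQ-I_n)u^2$ block — this is where the ``single vertex with two legs'' of Ex.~\ref{ex:2legs} and the graph of groups of Ex.~\ref{ex:2examples} serve as useful sanity checks. A secondary point requiring care is the possibility of loops, which contribute $h$ and $\oh$ both rooted at the same vertex; here one must verify the off-diagonal/diagonal conventions for $A$ (loops counted with multiplicity $2$) are consistent with~\eqref{eq:Wdecomp}, but this is the same verification as in the classical case and poses no new difficulty. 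Once the $n\times n$ determinant is identified, clearing denominators and reconciling the powers of $(1-u^2)$ and $(1+u)$ against $\det(I_k+Ju) = (1+u)^l(1-u^2)^m$ finishes the proof.
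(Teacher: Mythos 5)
Your proposal is correct and follows essentially the same route as the paper: the key decomposition $W = TCS^{\mathsf T} - J$ (in the paper's transposed convention, $W + J = {}^tTSD$ with $SD = CS$), followed by a reduction to an $n \times n$ determinant and the computation $\det(I_k + Ju) = (1+u)^l(1-u^2)^m$, with the exponent $m - n = b_1(X)-1$ emerging at the end. The only cosmetic difference is that you invoke the Sylvester identity $\det(I-MN) = \det(I-NM)$ after inverting $I_k + Ju$, whereas the paper writes out the equivalent explicit block-matrix factorization (a Schur-complement identity), which has the minor advantage of holding as a polynomial identity without any invertibility caveat; your version of the middle step, computing $S^{\mathsf T}(I_k+Ju)^{-1}T = \tfrac{1}{1-u^2}(A - uQ)$ with the legs contributing $\tfrac{1}{1+u} = \tfrac{1-u}{1-u^2}$ on the diagonal, does indeed assemble to give $A$ and $Q$ (with the paper's leg conventions for $A_{vv}$ and $\val(v)$) as you predict, so the bookkeeping you flag as the main obstacle works out.
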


Before proceeding with the proof, we observe that $\zeta(u,\XX)$ depends only on the structure of the underlying graph $X$, which determines $b_1(X)$, $l$, $A$, and $Q$, and the orders of the groups $X_v$, and not on the group structures. Hence we can interpret the above formula as the definition of a zeta function of a graph $X$ with vertex weights, specified by the charges $c(v)$. Furthermore, there is no reason to require that these weights be integers, or even positive. A similar zeta function was recently introduced in~\cite{2019KonnoMitsusashiMoritaSato}.

\begin{proof} As with the proof of Prop.~\ref{prop:twoterm}, we generalize the corresponding proof for graphs, which is originally due to Bass (see pp.~86-90 in~\cite{2011Terras}). We order the half-edges of $X$ so that $\oh_j=h_{j+m}$ for $j=1,\ldots,m$ and $\oh_j=h_j$ for $j=2m+1,\ldots,k$, and define the following auxiliary matrices:

\begin{enumerate}

\item $S$ is the $n\times k$ start matrix, with entries $(S)_{vh}=1$ if $v=r(h)$ and zero otherwise.

\item $T$ is the $n\times k$ terminal matrix, with entries $(T)_{vh}=1$ if $v=r(\oh)$ and zero otherwise.

\item $J$ is the $k\times k$ involution matrix, with entries $(J)_{hh'}=1$ if $h'=\oh$ and zero otherwise.

\item $D$ is the $k\times k$ diagonal matrix with entries $(D)_{hh'}=\delta_{hh'} c(r(h'))$. 

\end{enumerate}

It is readily verified that these matrices satisfy the following relations:
$$
SJ=T,\quad TJ=S,\quad {}^tJ=J,\quad W+J={}^tTSD,\quad SD=CS, \quad S{}^tT=A, \quad S{}^tS=Q,
$$
whence
$$
SD{}^tT=CA,\quad SD{}^tS=CQ.
$$
A direct calculation shows that the following $(n+k)\times (n+k)$ matrices are equal:
$$
\left(\begin{array}{cc} I_n & 0 \\ {}^tT & I_k \end{array}\right)
\left(\begin{array}{cc} I_n(1-u^2) & SDu \\ 0 & I_k-({}^tTSD-J)u \end{array}\right)=
$$
$$
=\left(\begin{array}{cc} I_n-SD{}^tTu+(SD{}^tS-I_n)u^2 & SDu \\ 0 & I_k+Ju \end{array}\right)
\left(\begin{array}{cc} I_n & 0 \\ {}^tT-{}^tSu & I_k \end{array}\right).
$$
Taking determinants, we see that
$$
(1-u^2)^n\det(I_k-({}^tTSD-J)u)=\det(I_n-SD^tTu+(SD{}^tS-I_n)u^2)\det (I_k+Ju).
$$
The last determinant is evaluated using our half-edge ordering as follows:
$$
\det (I_k+Ju)=\left|\begin{array}{ccc} I_m & I_mu & 0 \\ I_mu & I_m & 0 \\ 0 & 0 & I_l(1+u)\end{array}\right|=
\left|\begin{array}{cc} I_m & I_mu  \\ I_mu & I_m \end{array}\right|(1+u)^l=(1-u^2)^m (1+u)^l,
$$
where we used the following equality to compute the $2m\times 2m$ determinant:
$$
\left(\begin{array}{cc} I_m & 0 \\ -I_mu & I_m
\end{array}\right)
\left(\begin{array}{cc} I_m & I_mu \\ I_mu & I_m
\end{array}\right)=
\left(\begin{array}{cc} I_m & I_mu \\ 0 & I_m(1-u^2)
\end{array}\right).
$$
Hence, using the two-term determinant formula~\eqref{eq:twoterm} and replacing $S$, $T$, $J$, and $D$ with $A$, $Q$, and $C$, we see that
$$
\zeta(u,\XX)^{-1}=(1-u^2)^{m-n}(1+u)^l\det(I_n-CAu+(CQ-I_n)u^2),
$$
which completes the proof.
\end{proof}

\begin{example} \label{ex:K4} We now enumerate the edge-free quotients of $K_4$, the complete graph on $4$ vertices, and compute their zeta functions. First, we compute $\zeta(u,K_4)$ using the three-term determinant formula~\eqref{eq:3term}:
$$
\zeta(u,K_4)^{-1}=(1-u^2)^2(1-u)(1-2u)(1+u+2u^2)^3.
$$
The automorphism group of $K_4$ is $S_4$, and the action of a subgroup $G\subset S_4$ on $K_4$ is edge-free if and only if $G$ does not contain a transposition. There are, up to conjugacy, five nontrivial subgroups with this property:

\begin{itemize} \item The order $2$ subgroup $C_{2,2}$ generated by a double transposition.

\item The order $3$ subgroup $C_3$ generated by a $3$-cycle. 

\item The Klein $4$-group $V_4$.

\item The order $4$ subgroup $C_4$ generated by a $4$-cycle. 

\item The alternating group $A_4$ of order $12$.
\end{itemize}

Fig.~\ref{fig:K4quotients} shows the corresponding quotients of $K_4$. Vertices are marked by a bold dot, so a line segment with no dot at the end represents a leg. We observe that $K_4$ does not admit a non-trivial free action, in other words $K_4$ is not a free Galois covering of any other graph.

\begin{figure}[h]
\centering
\begin{tikzcd}
\begin{tikzpicture}
\draw [ultra thick] (0,0) .. controls (0.5,0.3) and (1,0.3) .. (1.5,0);
\draw [ultra thick] (0,0) .. controls (0.5,-0.3) and (1,-0.3) .. (1.5,0);
\draw[fill](0,0) circle(1mm);
\draw[fill](1.5,0) circle(1mm);
\draw [ultra thick] (0,0) -- (-0.7,0);
\draw [ultra thick] (1.5,0) -- (2.2,0);
\end{tikzpicture}
&&
\begin{tikzpicture}
\draw[fill](0,0) circle(1mm);
\draw[fill](2.0,0.5) circle(1mm);
\draw[fill](1.5,-0.3) circle(1mm);
\draw[fill](0.9,1.5) circle(1mm);
\draw [ultra thick] (0,0) -- (2.0,0.5);
\draw [ultra thick] (0,0) -- (1.5,-0.3);
\draw [ultra thick] (0,0) -- (0.9,1.5);
\draw [ultra thick] (2.0,0.5) -- (1.5,-0.3);
\draw [ultra thick] (2.0,0.5) -- (0.9,1.5);
\draw [ultra thick] (1.5,-0.3) -- (0.9,1.5);
\end{tikzpicture}
\arrow{ll}[swap]{C_{2,2}} \arrow{lld}[swap]{C_3} \arrow{d}{V_4} \arrow{rrd}{C_4} \arrow{rr}{A_4}
&& 
\begin{tikzpicture}
\draw [ultra thick] (0,0) -- (0.7,0);
\draw[fill](0,0) circle(1mm) node[left]{$\mathbb{Z}/3\mathbb{Z}$};
\end{tikzpicture}
\\
\begin{tikzpicture}
\draw [ultra thick] (0,0) -- (-1.5,0);
\draw [ultra thick] (0,0) .. controls (0.5,0.3) and (0.6,0.1) .. (0.6,0);
\draw [ultra thick] (0,0) .. controls (0.5,-0.3) and (0.6,-0.1) .. (0.6,0);
\draw[fill](0,0) circle(1mm);
\draw[fill](-1.5,0) circle(1mm) node[left]{$\mathbb{Z}/3\mathbb{Z}$};
\end{tikzpicture} 
&& 
\begin{tikzpicture}
\draw [ultra thick] (0,0) -- (-0.7,0);
\draw [ultra thick] (0,0) -- (0.6,0.3);
\draw [ultra thick] (0,0) -- (0.6,-0.3);
\draw[fill](0,0) circle(1mm);
\draw[fill,white](0,0.4) circle(0.1mm); 
\end{tikzpicture}
&& 
\begin{tikzpicture}
\draw [ultra thick] (0,0) -- (-0.7,0);
\draw [ultra thick] (0,0) .. controls (0.5,0.3) and (0.6,0.1) .. (0.6,0);
\draw [ultra thick] (0,0) .. controls (0.5,-0.3) and (0.6,-0.1) .. (0.6,0);
\draw[fill](0,0) circle(1mm);
\end{tikzpicture}
\end{tikzcd}
\caption{Edge-free quotients of $K_4$}
\label{fig:K4quotients}
\end{figure}

In Fig.~\ref{fig:K4zetas} we calculate the zeta functions of the quotient graphs of groups $K_4/G$ for various $G$. We observe that all of them divide the zeta function of $K_4$, which follows from Cor.~\ref{cor:divisibility}.

\begin{figure}[h]
\centering
\begin{tabular}{c|c|c|c|c|c|c|c}
$G$ & $K_4/G$ & $b_1$ & $l$ & $A$ & $Q$ & $C$ & $\zeta^{-1}(u,K_4/G)$ \\
 \hline
 $C_{2,2}$ &
\begin{tikzpicture}
\draw [ultra thick] (0,0) .. controls (0.5,0.3) and (1,0.3) .. (1.5,0);
\draw [ultra thick] (0,0) .. controls (0.5,-0.3) and (1,-0.3) .. (1.5,0);
\draw[fill](0,0) circle(1mm);
\draw[fill](1.5,0) circle(1mm);
\draw [ultra thick] (0,0) -- (-0.7,0);
\draw [ultra thick] (1.5,0) -- (2.2,0);
\end{tikzpicture}
& $1$ & $2$ & $\left(\begin{array}{cc} 1 & 2 \\ 2 & 1\end{array}\right)$ & $\left(\begin{array}{cc} 3 & 0 \\ 0 & 3\end{array}\right)$ & $\left(\begin{array}{cc} 1 & 0 \\ 0 & 1\end{array}\right)$ & 
\begin{tabular}{c} $(1+u)^2(1-u)\times$ \\  $(1-2u)(1+u+2u^2)$\end{tabular} \\
\hline
$C_3$ &  
\begin{tikzpicture}
\draw [ultra thick] (0,0) -- (-1.5,0);
\draw [ultra thick] (0,0) .. controls (0.5,0.3) and (0.6,0.1) .. (0.6,0);
\draw [ultra thick] (0,0) .. controls (0.5,-0.3) and (0.6,-0.1) .. (0.6,0);
\draw[fill](0,0) circle(1mm);
\draw[fill](-1.5,0) circle(1mm) node[left]{$\mathbb{Z}/3\mathbb{Z}$};
\end{tikzpicture}
&  $1$ & $0$ & $\left(\begin{array}{cc} 0 & 1 \\ 1 & 2\end{array}\right)$ & $\left(\begin{array}{cc} 1 & 0 \\ 0 & 3\end{array}\right)$ & $\left(\begin{array}{cc} 3 & 0 \\ 0 & 1\end{array}\right)$ & \begin{tabular}{c} $(1-u)(1-2u)\times $ \\ $(1+u+2u^2)$\end{tabular} \\
 \hline
 $V_4$ &
\begin{tikzpicture}
\draw [ultra thick] (0,0) -- (-0.7,0);
\draw [ultra thick] (0,0) -- (0.6,0.3);
\draw [ultra thick] (0,0) -- (0.6,-0.3);
\draw[fill](0,0) circle(1mm);
\draw[fill,white](0,0.4) circle(0.1mm); 
\end{tikzpicture}
 &  $0$ & $3$ & $(3)$ & $(3)$ & $(1)$ & $(1+u)^2(1-2u)$ \\
\hline
$C_4$ &
\begin{tikzpicture}
\draw [ultra thick] (0,0) -- (-0.7,0);
\draw [ultra thick] (0,0) .. controls (0.5,0.3) and (0.6,0.1) .. (0.6,0);
\draw [ultra thick] (0,0) .. controls (0.5,-0.3) and (0.6,-0.1) .. (0.6,0);
\draw[fill](0,0) circle(1mm);
\end{tikzpicture}
 &  $1$ & $1$ & $(3)$ & $(3)$ & $(1)$ & $(1+u)(1-u)(1-2u)$ \\
\hline
 $A_4$ &
\begin{tikzpicture}
\draw [ultra thick] (0,0) -- (0.7,0);
\draw[fill](0,0) circle(1mm) node[left]{$\mathbb{Z}/3\mathbb{Z}$};
\end{tikzpicture}
 & $0$ & $1$ & $(1)$ & $(1)$ & $(3)$ & $1-2u$ \\

\end{tabular}
\caption{Zeta functions of edge-free quotients of $K_4$}
\label{fig:K4zetas}
\end{figure}

\end{example}

\section{Path lifting, primes, and the $L$-function}

In this section, we consider an edge-free action of a group $G$ on a finite graph $Y$. We denote the quotient graph by $X=Y/G$ and the projection map by $\pi:Y\to X$. We define a quotient graph of groups $\XX=Y/\!\!/G$ and determine the relationship between the primes of $Y$ and $\XX$. Our description of the quotient $Y/\!\!/G$ and the procedure of lifting paths from $\XX$ to $Y$ is borrowed from Bass (see Ch.~3 of~\cite{1993Bass}), with some necessary modifications to include the legs of $Y$ and $X$.

\subsection{Path-lifting and primes in edge-free quotients}

We first describe the quotient $Y/\!\!/G$. Choose a spanning tree $T_X\subset X$ (not containing the legs of $X$), and choose a connected lift $T\subset Y$ of $T_X$. The map $\pi:V(T)\to V(T_X)=V(X)$ is a bijection, and for $v\in V(X)$ we denote by $v^T$ its lift to $T$.

\begin{definition} The quotient $\XX=Y/\!\!/G$ is the graph of groups with underlying graph $X=Y/G$, and whose vertex group $X_v$ at $v\in V(X)$ is the stabilizer of $v^T\in V(Y)$.
\label{def:edgefreequotient}

\end{definition}

We now describe the procedure of lifting a path from $\XX$ to $Y$. We mimic the setup of a free Galois covering by partitioning the half-edges of $Y$ into sheets indexed by elements of $G$. First, we choose the identity sheet $S_1=S$. For each half-edge $h\in H(X)$ we choose a half-edge $h^S\in H(Y)$ above it, and an element $F(h)\in G$, called the {\it Frobenius element} of $h$, in the following way. The half-edges of $X$ consist of those that are in $T_X$, the $b_1(X)$ edges that are not in the spanning tree, and the legs:
$$
H(X)=H(T_X)\cup \{e_1,\ldots,e_{b_1(X)}\}\cup L(X).
$$
For a half-edge $h\in H(T_X)$ we denote by $h^S$ its lift to $T$, and set $F(h)=1$. For each edge $e=(h,\oh)$ of $X$ not in the spanning tree, we choose an orientation and denote by $u=r(h)$ and $v=r(\oh)$ the initial and terminal vertices. Choose a half-edge $h^S\in H(Y)$ above $h$ that is rooted on $T$, so that $\pi(h^S)=h$ and $r(h^S)=u^T$. The complementary half-edge $\overline{h^S}$ is rooted at a vertex $r(\overline{h^S})\in V(Y)$ lying above the vertex $v\in V(X)$, and we choose $F(h)\in G$ to be any element such that $F(h)(v^T)=r(\overline{h^S})$. We then set $F(\oh)=F(h)^{-1}$ and $(\oh)^S=F(\oh)(\overline{h^S})\in H(Y)$; this is a half-edge lying above $\oh$ and rooted at $v^T$. 

Finally, a leg $l$ of $X$ is the quotient of either a set of legs of $Y$, or a set of edges (see Def.~\ref{def:quotient} and following discussion). In the former case, we choose a leg $l^S\in L(Y)$ over $l$ rooted at $r(l)^T$, and set $F(l)=1$. In the latter, we choose an edge $\{h,\oh\}\in E(Y)$ so that $\pi(h)=l$ and $r(h)=r(l)^T$, denote $l^S=h$, and denote by $F(l)$ the unique element of order 2 sending $h$ to $\oh$. 

We have constructed a set of half-edges $\{h^S:h\in H(X)\}$ of $Y$, containing $H(T)$, that maps bijectively to $H(X)$ and that are all rooted at vertices of $T$, so that $r(h^S)=r(h)^T$. The set $S_1=V(T)\cup \{h^S:h\in H(X)\}$ (which is not generally a subgraph of $Y$) is called the {\it identity sheet} of the covering $\pi:Y\to X$, relative to all of the choices made above. For $g\in G$, we denote the {\it $g$-th sheet} of the covering by $S_g=gS_1$. We observe that the set of half-edges of $Y$ is the disjoint union of the $H(S_g)$ over all $g\in G$, hence to any half-edge $h\in H(Y)$ we associate its {\it sheet number} $N(h)\in G$; this is the unique element of $G$ such that $N(h)(\pi(h)^S)=h$. The essential difference with free coverings, however, is that a vertex with a nontrivial stabilizer lies on multiple sheets: for any $v\in V(Y)$ we can find $g\in G$ such that $v=g(\pi(v)^T)$, and then $v\in V(S_{gg'})$ if and only if $g'\in X_{\pi(v)}$.

We also note that for any $h\in H(X)$, the Frobenius element $F(h)$ is the sheet number of $\overline{h^S}$, in other words $F(h)\oh^S=\overline{h^S}$ and therefore $N(\overline{f})=N(f)F(\pi(f))$ for any $f\in H(Y)$. In addition, $F(\oh)=F(h)^{-1}$ for all $h\in H(X)$, since $F(l)^2=1$ for any leg $l\in L(X)$. 

\begin{definition} Let $P=g_0h_1g_1h_2\cdots g_{n-1}h_n$ be a path in $\XX$. We define the {\it Frobenius element} $F(P)\in G$ of $P$ as follows:
$$
F(P)=g_0F(h_1)g_1F(h_2)\cdots g_{n-1}F(h_n).
$$

\end{definition}

The map $P\mapsto F(P)$ is a group homomorphism from the path algebra of $\XX$ (defined as the free product of the vertex groups and the free group $F(H(X))$ modulo the relations $\oh=h^{-1}$ for all $h\in H(X)$) to $G$. It is also clear that if the terminal vertex of a path $P_1$ is the initial vertex of $P_2$, then $F(P_1P_2)=F(P_1)F(P_2)$.

We are now ready to define the lifting of a path from $\XX$ to $Y$. For free coverings, lifting a path requires choosing a starting vertex of the covering. In our setting vertices do not, in general, distinguish sheets, therefore we instead specify the sheet number.

\begin{definition} Let $Q=g_0h_1g_1h_2\cdots g_{n-1}h_n$ be a path in $\XX$, and let $g\in G$. We define the path $\pi^{-1}(Q,g)$ in $Y$, called the {\it lift of $Q$ starting on the $g$-th sheet}, as
$$
\pi^{-1}(Q,g)=f_1f_2\cdots f_n,
$$
where $f_j=g\tg_j h_j^S$, and the elements $\tg_j\in G$ for $j=1,\ldots,n$ are defined as follows:
$$
\tg_1=F(g_0),\tg_2=F(g_0h_1g_1),\ldots, \tg_n=F(g_0h_1g_1\cdots h_{n-1}g_{n-1}).
$$

\end{definition}

We check that $\pi^{-1}(Q,g)$ is indeed a path. Let $v_0,\ldots,v_n$ be the vertex sequence of $Q$, so that $r(h_j)=v_{j-1}$ and $r(\oh_j)=v_j$ for $j=1,\ldots,n$. We need to check that the half-edges $f_j$ and $\overline{f_{j-1}}$ are rooted at the same vertex for $j=2,\ldots,n$. We see that $\tg_{j+1}=\tg_jF(h_j)g_j$, therefore
$$
f_{j+1}=g\tg_{j+1}h^S_{j+1}=g\tg_jF(h_j)g_jh^S_{j+1}.
$$
The half-edge inversion commutes with the group action, so
$$
\overline{f_j}=g\tg_j \overline{h_j^S}=g\tg_j F(h_j) \oh_j^S.
$$
It follows that
$$
r(\overline{f_j})=g\tg_j F(h_j) r(\oh_j^S)=g\tg_j F(h_j)r(\oh_j)^T=g\tg_j F(h_j) v_j^T=
g\tg_j F(h_j) g_jv_j^T=g\tg_j F(h_j) g_jr(h^S_{j+1})=r(f_{j+1}),
$$
where we used the fact that $g_j$ lies in the stabilizer of $v_j^T$.

We will require the following properties of path lifting, which are trivially verified.

\begin{lemma} Let $Q$ be a path in $\XX$. \label{lem:lift}

\begin{enumerate} \item For any $g,g'\in G$, the lifts of $Q$ starting on the $g$-th and $g'$-th sheets are related as follows:
$$
g^{-1}\pi^{-1}(Q,g)=(g')^{-1}\pi^{-1}(Q,g').
$$

\item If $Q$ is closed and $F(Q)=1$, then any lift $\pi^{-1}(Q,g)$ of $Q$ is closed. 

\item If $Q$ is closed, then for any $g\in G$ and any $k\geq 1$ the lift of $Q^k$ starting on the $g$-th sheet is equal to
$$
\pi^{-1}(Q^k,g)=P_0P_1\cdots P_{k-1}, \quad P_j=F(Q)^j \pi^{-1}(Q,g).
$$

\end{enumerate}

\end{lemma}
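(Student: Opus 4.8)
The plan is to read all three items straight off the definition of the lift $\pi^{-1}(Q,g)=f_1f_2\cdots f_n$, where $f_j=g\tg_j h_j^S$ and $\tg_j=F(g_0h_1g_1\cdots h_{j-1}g_{j-1})$; none of the three needs more than bookkeeping with these Frobenius prefixes. Item~(1) is immediate: since $G$ acts on $H(Y)$, the $j$-th half-edge of $g^{-1}\pi^{-1}(Q,g)$ is $g^{-1}f_j=\tg_j h_j^S$, and the elements $\tg_j$ depend only on $Q$. Hence $g^{-1}\pi^{-1}(Q,g)$ is one and the same path for every $g$; equivalently $\pi^{-1}(Q,g)=g\cdot\pi^{-1}(Q,1)$, which is the convenient form in which (1) gets used in (2), (3), and later in the paper.

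For item~(2) I would just identify the two endpoints of $\pi^{-1}(Q,g)$. Its initial vertex is $r(f_1)=g\tg_1\,r(h_1^S)=gF(g_0)\,r(h_1)^T=g\,v_0^T$, using $F(g_0)=g_0\in X_{v_0}=\Stab(v_0^T)$. Extending to $j=n$ the computation of $r(\overline{f_j})$ already carried out immediately before the lemma, and using $\tg_n F(h_n)=F(g_0h_1g_1\cdots g_{n-1}h_n)=F(Q)$, gives the terminal vertex $r(\overline{f_n})=gF(Q)\,v_n^T$. When $Q$ is closed we have $v_0=v_n$, so if moreover $F(Q)=1$ the two endpoints coincide and $\pi^{-1}(Q,g)$ is a closed path.

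For item~(3) the only thing to track is how the Frobenius prefixes accumulate over the $k$ periods of $Q^k$. Writing $Q^k$ as the $k$-fold concatenation of $Q$, its half-edge sequence is $h_1,\ldots,h_n$ repeated $k$ times and its group sequence is $g_0,\ldots,g_{n-1}$ repeated $k$ times, so for the half-edge in position $in+j$ with $0\le i\le k-1$ and $1\le j\le n$ the associated element is $\tg_{in+j}=F(Q^i\cdot g_0h_1g_1\cdots h_{j-1}g_{j-1})=F(Q)^i\,\tg_j$, where I use that $P\mapsto F(P)$ is multiplicative under concatenation of paths and $F(Q^i)=F(Q)^i$. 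Hence the $(in+j)$-th half-edge of $\pi^{-1}(Q^k,g)$ is $g\,F(Q)^i\,\tg_j h_j^S$, and one recognizes the $i$-th length-$n$ block of $\pi^{-1}(Q^k,g)$ as $P_i=F(Q)^i\pi^{-1}(Q,g)$. Stacking the $k$ blocks yields $\pi^{-1}(Q^k,g)=P_0P_1\cdots P_{k-1}$; that each block ends where the next begins is the endpoint computation of item~(2) applied to the prefixes $Q^i$ of $Q^k$.

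I do not anticipate a genuine obstacle: the lemma is a formal consequence of the definitions, which is why its statement already flags it as ``trivially verified''. The one point where the hypotheses are truly used is the repeated appeal, in (2) and (3), to the fact that each $g_j\in X_{v_j}$ fixes the lifted vertex $v_j^T$ — precisely the reason Def.~\ref{def:edgefreequotient} sets $X_v=\Stab(v^T)$ — and the one thing to be careful about in (3) is keeping the period-shift by $F(Q)$ on the correct side of the sheet label; everything else is routine cancellation in $G$.
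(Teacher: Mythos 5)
Your treatment of items (1) and (2) is correct and carries out the direct bookkeeping with the Frobenius prefixes $\tg_j$ that the paper has in mind when it declares the lemma ``trivially verified'' (no proof is actually given in the text). The endpoint identities $r(f_1)=g\,v_0^T$ and $r(\overline{f_n})=gF(Q)v_n^T$ are exactly what (2) needs, and (1) reads directly off $f_j=g\tg_j h_j^S$.

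In item (3), however, the final ``one recognizes'' step does not follow from your own computation. You correctly obtain that the $(in+j)$-th half-edge of $\pi^{-1}(Q^k,g)$ is $gF(Q)^i\tg_j h_j^S$. But the $j$-th half-edge of $P_i=F(Q)^i\pi^{-1}(Q,g)$ is $F(Q)^i\cdot(g\tg_j h_j^S)=F(Q)^ig\,\tg_j h_j^S$, since the $G$-action on a path in $Y$ is applied componentwise to its half-edges. The two expressions differ by the commutator of $g$ with $F(Q)^i$, so the identification $P_i=F(Q)^i\pi^{-1}(Q,g)$ is valid only when $g$ centralizes $F(Q)$. What your computation actually establishes is
$$
\pi^{-1}(Q^k,g)=P_0P_1\cdots P_{k-1},\qquad P_i=\pi^{-1}(Q,gF(Q)^i)=gF(Q)^ig^{-1}\,\pi^{-1}(Q,g),
$$
which reduces to the stated formula when $g=1$, and by item (1) one may always reduce to that case. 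This imprecision is in fact present in the lemma as stated in the paper; it is harmless for the paper's purposes, since the proof of Thm.~\ref{thm:primes} applies item (3) to $Q^{f(Q,Y/\XX)}$, whose Frobenius element is trivial, and Rem.~\ref{rem:shortestclosed} leaves the starting sheet implicit. Still, a blind verification should flag the mismatch between the $gF(Q)^i$ you derived and the $F(Q)^ig$ that the stated formula demands, rather than silently elide it.
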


We note that although we say that $\pi^{-1}(Q,g)$ starts on the $g$-th sheet, the first half-edge of $\pi^{-1}(Q,g)$ in fact lies on the $gg_0$-th sheet (in other words, the path starts on the $g$-th sheet and immediately moves to the $gg_0$-th sheet via its first element $g_0$). However, the starting vertex of $\pi^{-1}(Q,g)$ does lie on the $g$-th sheet (as well as the $gg_0$-th). 

In general, we cannot reverse this procedure and define the image $\pi(P)$ of a path $P=f_1f_2\cdots f_n$ in $Y$. Specifically, if we set $h_j=\pi(f_j)$, then we can solve $f_j=g\tg_jh_j^S=N(f_j)h_j^S$ for $g_j$ to get $g_j=F(h_j)^{-1}N(f_j)^{-1}N(f_{j+1})$ for $j=1,\ldots,n-1$. However, solving for $g_0$ we get $g_0=g^{-1}N(f_1)$, so to define $\pi(P)$ we need to manually specify a starting sheet $g$ for $P$, which can be any sheet containing the initial vertex $r(f_1)$. However, if $P$ is a closed path, we can naturally define the {\it ending sheet number} as $N(\overline{f_n})=N(f_n)F(h_n)$, the sheet number of the involution of the last half-edge of $P$. This sheet number is the natural choice for the starting sheet number $g$, hence we can make the following definition.

\begin{definition} Let $P=f_1f_2\cdots f_n$ be a closed path in $Y$. The {\it image} of $P$ is the closed path
$$
\pi(P)=g_0h_1g_1\cdots g_{n-1}h_n
$$
where
$$
h_j=\pi(f_j), \quad g_j=F(h_j)^{-1}N(f_j)^{-1}N(f_{j+1}), \quad j=1,\ldots,n-1,\quad g_0=F(h_n)^{-1}N(f_n)^{-1}N(f_1).
$$
\end{definition}

We establish some elementary properties of the image of a closed path. 

\begin{lemma} Let $P=f_1f_2\cdots f_n$ and $P'=f'_1f'_2\cdots f'_n$ be closed paths in $Y$. \label{lem:image}

\begin{enumerate}

\item $\pi(P)=\pi(P')$ if and only if $P'=gP$ for some $g\in G$.  \label{item:1}

\item $F(\pi(P))=1$. \label{item:2}

\item $\pi^{-1}(\pi(P),N(\overline{f_n}))=P$. \label{item:3}

\item $P$ is reduced if and only if $\pi(P)$ is reduced.  \label{item:4}

\end{enumerate}

\end{lemma}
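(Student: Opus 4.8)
My plan is to handle the four parts in the order (2), (3), (1), (4), since (3) uses (2), part (1) uses (3), and part (4) is cleanest to do last. Throughout I would rely on three structural identities coming from the construction of the identity sheet: $f=N(f)\,\pi(f)^S$ for every $f\in H(Y)$; $N(\overline f)=N(f)\,F(\pi(f))$; and $\overline{h^S}=F(h)\,\overline{h}^S$ for every $h\in H(X)$, this last being the distinction between the involution of the chosen lift $h^S$ and the chosen lift $\overline{h}^S$ of the involution. I would also record the $G$-equivariance $N(gf)=g\,N(f)$, immediate from the defining property of $N$, and begin by checking that $\pi(P)$ really is a closed path in $\XX$: writing $v_0,\dots,v_n$ for the vertex sequence of $\pi(P)$, the elements $N(f_{j+1})$ and $N(\overline{f_j})=N(f_j)F(h_j)$ both carry $v_j^T$ to the common vertex $r(f_{j+1})=r(\overline{f_j})$, so $g_j^{-1}=N(f_{j+1})^{-1}N(\overline{f_j})$ lies in $X_{v_j}$, and the index $j=0$ is the same computation applied to $r(f_1)=r(\overline{f_n})$.

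For part~(2) I would substitute $g_j=F(h_j)^{-1}N(f_j)^{-1}N(f_{j+1})$ and $g_0=F(h_n)^{-1}N(f_n)^{-1}N(f_1)$ into $F(\pi(P))=g_0F(h_1)g_1\cdots g_{n-1}F(h_n)$ and telescope: with $A_j:=F(h_n)^{-1}N(f_n)^{-1}N(f_j)$ one has $g_0=A_1$ and $A_jF(h_j)g_j=A_{j+1}$ for $1\le j\le n-1$, so the product collapses to $A_nF(h_n)=F(h_n)^{-1}F(h_n)=1$. For part~(3) I would write $\pi^{-1}(\pi(P),N(\overline{f_n}))=f_1'\cdots f_n'$ with $f_j'=N(\overline{f_n})\,\widetilde g_j\,h_j^S$ and prove by induction that $\widetilde g_j=N(\overline{f_n})^{-1}N(f_j)$: the base case reads $\widetilde g_1=g_0=F(h_n)^{-1}N(f_n)^{-1}N(f_1)=N(\overline{f_n})^{-1}N(f_1)$, and the inductive step combines the recursion $\widetilde g_{j+1}=\widetilde g_j F(h_j)g_j$ with the formula for $g_j$; then $f_j'=N(f_j)h_j^S=f_j$. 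Part~(1) is then formal: if $P'=gP$, then $h_j'=h_j$ and $N(f_j')=g\,N(f_j)$, so each $g_j'$ in $\pi(P')$ equals $F(h_j)^{-1}N(f_j)^{-1}g^{-1}g\,N(f_{j+1})=g_j$ (and likewise $g_0'=g_0$), whence $\pi(P')=\pi(P)$; conversely, if $\pi(P')=\pi(P)=:Q$, then by part~(3) both $P=\pi^{-1}(Q,N(\overline{f_n}))$ and $P'=\pi^{-1}(Q,N(\overline{f_n'}))$ are lifts of $Q$, so Lemma~\ref{lem:lift}(1) gives $N(\overline{f_n})^{-1}P=N(\overline{f_n'})^{-1}P'$, i.e. $P'=gP$ with $g=N(\overline{f_n'})N(\overline{f_n})^{-1}$.

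Part~(4) is the step I expect to be the main obstacle, and I would reduce it to the pointwise assertion that for each cyclic index $j$ (with the convention $g_n:=g_0$) one has $f_{j+1}=\overline{f_j}$ if and only if $h_{j+1}=\overline{h_j}$ and $g_j=1$. Applying $\pi$ shows $h_{j+1}=\overline{h_j}$ is necessary, so assuming it I would compute $\overline{f_j}=N(f_j)\,\overline{h_j^S}=N(f_j)F(h_j)\,\overline{h_j}^S=N(f_j)F(h_j)\,h_{j+1}^S$; comparing with $f_{j+1}=N(f_{j+1})h_{j+1}^S$ and using that $G$ acts freely on $H(Y)$, the equality $f_{j+1}=\overline{f_j}$ becomes $N(f_{j+1})=N(f_j)F(h_j)$, which by the formula for $g_j$ is exactly $g_j=1$ (the case $j=n$ is identical, using the closedness of $P$ and the definition of $g_0$). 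Since for a leg $h_j$ the condition $h_{j+1}=\overline{h_j}=h_j$ is precisely ``the same leg twice in a row,'' this single equivalence simultaneously matches backtracks, tails, and repeated legs of $P$ with those of $\pi(P)$, so $P$ is reduced exactly when $\pi(P)$ is. The only genuinely delicate point in the whole argument is keeping the two overlined half-edges $\overline{h^S}$ and $\overline{h}^S$ apart — they differ by $F(h)$ — but once that identity is deployed consistently the rest is mechanical.
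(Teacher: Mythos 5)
Your proof is correct and follows essentially the same route as the paper's: your telescoping verification of (2) and the inductive computation for (3) spell out what the paper dismisses as ``trivially verified,'' and your treatment of (4) via freeness of the $G$-action on $H(Y)$ together with the identity $\overline{h^S}=F(h)\,\oh^S$ is the same cancellation argument the paper performs in the $\tg_j$-notation. The only genuine (and harmless) deviation is in the converse direction of (1), where you deduce $P'=gP$ from part (3) and Lemma~\ref{lem:lift}(1), whereas the paper argues directly that the relations $N(f_j)^{-1}N(f_{j+1})=N(f'_j)^{-1}N(f'_{j+1})$ force the conjugating elements $G_j$ to be independent of $j$.
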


\begin{proof} For any $g\in G$ we have $\pi(gf_j)=\pi(f_j)$ and $N(gf_j)=gN(f_j)$, hence $\pi(gP)=\pi(P)$. Conversely, if $\pi(P)=\pi(P')$ then $f'_j=G_j f_j$ for some $G_j\in G$ and therefore $N(f'_j)=G_j N(f_j)$, and the condition $N(f_j)^{-1}N(f_{j+1})=N(f'_j)^{-1}N(f'_{j+1})$ then implies that $G_{j+1}=G_j$ for all $j$. This proves~\ref{item:1}, and~\ref{item:2} and~\ref{item:3} are trivially verified. To prove~\ref{item:4}, suppose that $f_{j+1}=g\tg_jF(h_j)g_jh^S_{j+1}$ is equal to $\overline{f_j}=g\tg_j F(h_j) \oh_j^S$ for some $j=1,\ldots,n-1$, then cancelling we see that $g_jh_{j+1}^S=\oh_j^S$, which is only possible if $g_j=1$ and $h_{j+1}=\oh_j$. Similarly, $f_1=\overline{f_n}$ is equivalent to $g_1=1$ and $h_1=\oh_n$.

\end{proof}

It follows that the image of a primitive path in $Y$ is a closed reduced path in $\XX$, so we can define the residual degree of a primitive path in $Y$:

\begin{definition} Let $P$ be a primitive path in $Y$. We say that $P$ {\it lies over} the primitive path $Q$ in $\XX$ if $\pi(P)=Q^f$, and we call $f=f(P,Y/\XX)$ the {\it residual degree} of $P$ relative to the covering $Y/\XX$.

\end{definition}

Finally, we consider what happens when we pass to equivalence classes. 

\begin{lemma} Let $\fp$ be a prime of $Y$, and let $P=f_1\cdots f_n$ and $P'=f'_1\cdots f'_n$ be representatives of $\fp$ with residual degrees $f$ and $f'$, so that $\pi(P)=Q^f$ and $\pi(P')=(Q')^{f'}$ for primitive paths $Q$ and $Q'$ in $\XX$. Then $f=f'$ and $Q\sim Q'$.

\end{lemma}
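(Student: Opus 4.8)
The plan is to show that the assignment $P\mapsto\pi(P)$ on closed paths of $Y$ is, up to equivalence, compatible with the cyclic rotation that defines equivalence of closed paths, and then to invoke the elementary fact that a closed reduced path in $\XX$ has a well-defined primitive root and exponent.

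\textbf{Step 1: behaviour under a single rotation.} Let $P=f_1f_2\cdots f_n$ be a closed path in $Y$ and let $\sigma P=f_2\cdots f_nf_1$ be its rotation by one half-edge. Writing $\pi(P)=g_0h_1g_1\cdots g_{n-1}h_n$ as in the definition of the image, with $h_j=\pi(f_j)$, $g_j=F(h_j)^{-1}N(f_j)^{-1}N(f_{j+1})$ for $j=1,\ldots,n-1$ and $g_0=F(h_n)^{-1}N(f_n)^{-1}N(f_1)$, I would compute directly from the definitions of $\pi(\sigma P)$ and of the sheet numbers that
\[
\pi(\sigma P)=g_1h_2g_2h_3\cdots g_{n-1}h_n\,g_0h_1,
\]
i.e.\ $\pi(\sigma P)$ is $\pi(P)$ with its initial block $g_0h_1$ moved to the end. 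In the language of the definition of equivalence of closed paths in $\XX$ this says $\pi(\sigma P)\sim\pi(P)$ with shift parameter $m=1$. This is a routine unwinding; the only subtle point is that the wrap-around term of $\pi(\sigma P)$ produced by the position $j=n-1$ comes out exactly to $g_0$, which is where the cyclic index conventions of Remark~\ref{rem:indexing} are used. Iterating, $\pi(\sigma^mP)$ is $\pi(P)$ rotated by $m$ blocks, hence $\pi(\sigma^mP)\sim\pi(P)$ for every $m$.

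\textbf{Step 2: reduction to uniqueness of the primitive root.} Since $P$ and $P'$ represent the same prime $\fp$ of the \emph{graph} $Y$, by definition $P'=\sigma^mP$ for some $m$, so by Step 1 we get $\pi(P')\sim\pi(P)$, that is, $(Q')^{f'}$ is a cyclic rotation of $Q^f$. I would then note that a cyclic rotation of $Q^f$ by $k$ half-edges equals $(\sigma^bQ)^f$ where $b=k\bmod\ell(Q)$, because $Q^f$ has period $\ell(Q)$ and rotating a perfect power $w^f=(w_1w_2)^f$ by $|w_1|$ yields $(w_2w_1)^f$. A cyclic rotation of a primitive closed path is again primitive (if $\sigma^bQ=R^j$ with $j\ge 2$, then $Q$ is a nontrivial power of a rotation of $R$, contradicting primitivity of $Q$). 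Thus $(Q')^{f'}=(\sigma^bQ)^f$ is an equality of paths in which both $\sigma^bQ$ and $Q'$ are primitive, and uniqueness of the primitive root of a word (Lyndon--Schützenberger) forces $Q'=\sigma^bQ$ and $f'=f$. Hence $f=f'$ and $Q'\sim Q$.

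\textbf{Expected main obstacle.} Everything conceptual is already in place in the excerpt: $f$ and the class $[Q]$ are well defined for a single primitive $P$ because every closed reduced path is a unique power of a primitive one; $\pi$ is defined on all closed paths; and equivalence of closed paths in $Y$ is literally rotation of half-edge sequences. The only non-mechanical input is the combinatorics-on-words statement that a word has a unique primitive root, which is standard. Consequently I expect the genuine work to be purely bookkeeping --- carrying the $\bmod\,n$ and $\bmod\,\ell(Q)$ index conventions carefully through the computation of $\pi(\sigma P)$ in Step 1 so that the wrap-around terms match exactly, and making sure the empty cases ($n=1$, $f=1$) are consistent with the displayed formula.
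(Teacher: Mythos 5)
Your proposal is correct and takes essentially the same route as the paper: both proofs rest on the observation that $\pi$ commutes with cyclic rotation (your Step~1 is the explicit version of the paper's "$f'_n=f_{n+k}$ implies $\pi(P')$ is a rotation of $\pi(P)$"), and then deduce $f=f'$ and $Q\sim Q'$ from uniqueness of the primitive root. The only cosmetic difference is that you appeal to uniqueness of the primitive root directly, whereas the paper phrases the last step as a mutual-divisibility-by-symmetry argument.
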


\begin{proof} By assumption, $f'_n=f_{n+k}$ for some integer $k$. If $Q=g_0h_1\cdots g_{n-1} h_n$, then it follows that $\pi(P')=(g_kh_{1+k}\cdots g_{n-1+k}h_{n+k})^f$,  which implies that $f'$ divides $f$. By symmetry, $f$ divides $f'$, hence they are equal, and $Q$ and $Q'$ are equivalent.

\end{proof}

We can therefore say that a prime $\fp$ of $Y$ lies over a prime $\fq$ of $\XX$ with residual degree $f=f(\fp,Y/\XX)$. We now determine the relationship between the primes of $Y$ and $\XX$, which turns out to be identical to the free case.

\begin{theorem} Let $Y$ be a finite graph, let $G$ be a group of order $d$, let $\XX$ be the quotient of $Y$ by an edge-free $G$-action, and let $\fq$ be a prime of $\XX$.

\begin{enumerate} \item The residual degree $f(\fp,Y/\XX)$ of any prime $\fp$ over $\fq$ is equal to the order $f(\fq,Y/\XX)$ of the Frobenius element $F(Q)$ in $G$, where $Q$ is any representative of $\fq$. \label{item:prime1}

\item The length of any prime $\fp$ over $\fq$ is equal to $\ell(\fp)=f(\fq,Y/\XX)\ell(\fq)$. \label{item:prime2}

\item $G$ acts transitively on the primes over $\fq$. \label{item:prime3}

\item The number of primes over $\fq$ is equal to $g=g(\fq,Y/\XX)=d/f(\fq,Y/\XX)$. \label{item:prime4}

\end{enumerate}

\label{thm:primes}

\end{theorem}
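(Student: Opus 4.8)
The plan is to carry out the standard Frobenius-theoretic analysis of primes in a Galois covering, adapted to the graph-of-groups setting via the path-lifting apparatus developed above. Fix a representative $Q = g_0h_1\cdots g_{n-1}h_n$ of $\fq$ with $n = \ell(\fq)$, and let $\phi = F(Q) \in G$ with order $f$. First I would prove item~\ref{item:prime2} together with item~\ref{item:prime1} by analyzing lifts of powers of $Q$. By Lemma~\ref{lem:lift}\,(3), the lift $\pi^{-1}(Q^k, g)$ decomposes as $P_0P_1\cdots P_{k-1}$ with $P_j = \phi^j \pi^{-1}(Q,g)$, so this lift is closed precisely when $\phi^k = 1$, i.e.\ when $f \mid k$. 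Taking $k = f$, the lift $P := \pi^{-1}(Q^f, g)$ is a closed path in $Y$ of length $fn$; I would check it is reduced using Lemma~\ref{lem:image}\,(4) (since $\pi(P) = Q^f$ is reduced, as a power of a reduced primitive path) and primitive because $Q$ is primitive and the residual degree of $P$ over $\fq$ is exactly $f$ (any smaller period of $P$ would, after applying $\pi$, force a smaller period of $Q^f$, hence $Q \sim Q'$ with $\ell(Q') < n$, contradicting primitivity of $Q$). Conversely, given any prime $\fp$ over $\fq$ with representative $P'$, by Lemma~\ref{lem:image}\,(2)--(3) we have $\pi(P') = Q'^{f'}$ for a primitive $Q' \sim Q$ and $F(\pi(P')) = 1$; since $F$ is a homomorphism on the path algebra, $1 = F(Q')^{f'}$ forces $f \mid f'$, while $P' = \pi^{-1}(\pi(P'), N(\overline{f'_n}))$ and the decomposition of Lemma~\ref{lem:lift}\,(3) applied to $Q'^{f'}$ shows that this lift is closed only because $\phi^{f'} = 1$ and is primitive only if $f' = f$ (a proper divisor would make $P'$ a proper power). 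This gives $f(\fp, Y/\XX) = f$ for every $\fp$ over $\fq$, proving items~\ref{item:prime1} and~\ref{item:prime2}.

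Next I would prove item~\ref{item:prime3}, transitivity of the $G$-action. For any $g \in G$, set $P_g := \pi^{-1}(Q^f, g)$; by Lemma~\ref{lem:lift}\,(1), $g^{-1}P_g$ is independent of $g$, so $P_g = g \cdot (g')^{-1}P_{g'}$, i.e.\ all the lifts $P_g$ lie in a single $G$-orbit. It remains to see that every prime $\fp$ over $\fq$ is represented by some $P_g$: given a representative $P'$ as above, by Lemma~\ref{lem:image}\,(3) we have $P' = \pi^{-1}(\pi(P'), N(\overline{f'_n}))$, and since $\pi(P')$ is a cyclic rotation of $Q^f$ and cyclic rotations of a lift of $Q^f$ are again lifts of (rotations of) $Q^f$ starting on appropriate sheets, $P'$ is equivalent to some $P_g$. (Concretely, rotating $Q$ to a different starting point $Q^{(m)}$ changes $F(Q^{(m)})$ only by conjugation, hence its order is still $f$, and the corresponding lift is a rotation of a $G$-translate of $P$.) Thus $G$ acts transitively on the set of primes over $\fq$.

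Finally, item~\ref{item:prime4} follows from items~\ref{item:prime1}--\ref{item:prime3} by the orbit--stabilizer theorem: $G$ acts transitively on the $g$ primes over $\fq$, so $g = d / |\Stab_G(\fp)|$ for any such $\fp$, and I would identify $\Stab_G(\fp)$ with the cyclic subgroup $\langle \phi \rangle$ of order $f$. Indeed, writing $P = P_1 = \pi^{-1}(Q^f, 1)$, an element $h$ fixes the prime $[P]$ iff $hP$ is a cyclic rotation of $P$; since $P = P_0P_1\cdots P_{f-1}$ with $P_j = \phi^j \pi^{-1}(Q,1)$ (each block of length $n$), the rotations of $P$ by multiples of $n$ are exactly $\phi^j P$, and one checks these are the only rotations in the $G$-orbit because a rotation by a non-multiple of $n$ would descend under $\pi$ to a non-trivial rotation of $Q$ within a single period, which is not $\pi$ of a $G$-translate of $P$ unless it is trivial. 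Hence $\Stab_G([P]) = \langle \phi \rangle$ has order $f$ and $g = d/f$.

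The main obstacle I anticipate is the bookkeeping in the correspondence between cyclic rotations of a closed path $P$ in $Y$ and the $G$-action together with rotations of its image $Q^f$ in $\XX$ — in particular, making precise and rigorous the claim that the only rotations of $P$ lying in its own $G$-orbit are the $\phi^j P$ (the step underlying both transitivity and the stabilizer computation). This requires carefully tracking sheet numbers $N(\overline{f_j})$ along $P$ and using that $N(\overline{f_j}) = N(f_j)F(h_j)$ together with $\pi(P) = Q^f$; I would isolate this as a short preliminary lemma describing how $\pi$ and $N$ interact with cyclic rotation before assembling the four parts.
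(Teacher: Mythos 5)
Your argument follows the same overall route as the paper — lift $Q^{f}$ via Lemma~\ref{lem:lift}, use Lemma~\ref{lem:image} to compare images, and deduce items~\ref{item:prime1}--\ref{item:prime3} — but there is one slip and one genuine divergence worth recording.

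The slip is the claim that the lift $\pi^{-1}(Q^k,g)$ ``is closed precisely when $\phi^k=1$.'' That is false: the initial vertex of the lift lies on the $g$-th sheet and the terminal vertex on the $g\phi^k$-th sheet, and these coincide exactly when $\phi^k$ lies in the stabilizer $X_{v_0}$ of the initial vertex, not when $\phi^k=1$. (This is precisely the content of Remark~\ref{rem:shortestclosed}: if $F(Q)$ is a nontrivial element of $X_{v_0}$, then $Q$ itself already lifts to a closed path, even though $F(Q)\ne 1$.) Your proof is not actually broken by this, because you never rely on the false direction: the divisibility $f\mid f'$ is obtained from $F(\pi(P'))=1$ (Lemma~\ref{lem:image}\,(\ref{item:2})), which is the correct mechanism, and the primitivity step only uses that $\phi^f=1$ implies the lift of $Q^{f'}$ is the $(f'/f)$-th power of the lift of $Q^f$. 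But the phrase ``closed only because $\phi^{f'}=1$'' in your converse direction should be replaced by the appeal to $F(\pi(P'))=1$; as written it asserts the false equivalence.

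The genuine divergence is item~\ref{item:prime4}. The paper counts directly: there are exactly $d$ primitive paths over each fixed representative $Q$ (the $d$ lifts $\pi^{-1}(Q^f,g)$, pairwise distinct since $G$ acts freely on half-edges), hence $d\,\ell(\fq)$ primitive paths over all rotations of $Q$, and since each prime over $\fq$ has length $f\ell(\fq)$ and is therefore represented by $f\ell(\fq)$ primitive paths, the number of primes is $d/f$. You instead invoke orbit--stabilizer and identify $\Stab_G([P])$ with $\langle\phi\rangle$; this works, and your outline of why the only rotations of $P$ in its $G$-orbit are $\phi^j P$ (via $\pi(P^{(r)})=(Q^f)^{(r)}$ and primitivity of $Q$ forcing $n\mid r$) is sound, but it is more bookkeeping than the paper's count. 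The paper's argument also has the side benefit of not needing a separate transitivity statement to get the count: once you know each prime is hit $f\ell(\fq)$ times by the $d\,\ell(\fq)$ lifts, the total is forced. If you keep the orbit--stabilizer route, do write out the preliminary lemma on how $\pi$ and cyclic rotation interact; you correctly flag it as the crux, and it is exactly the point where the distinction between ``closed'' and ``$\phi^k=1$'' can quietly resurface if one is not careful about the stabilizer $X_{v_0}$.
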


\begin{proof} We first note that if $Q=Q_1Q_2$ and $Q'=Q_2Q_1$ are two representatives of $\fq$, then $F(Q')=F(Q_1)^{-1}F(Q)F(Q_1)$ and hence the elements $F(Q)$ and $F(Q')$ have the same order, so the order $f(\fq,Y/\XX)$ is well-defined. 

Choose a representative $Q$ of $\fq$, and let $P=f_1\cdots f_n$ be a primitive path in $Y$ lying over $Q$ with residual degree $f(P,Y/\XX)$, so that $\pi(P)=Q^{f(P,Y/\XX)}$. By Lem.~\ref{lem:image} $F(Q^{f(P,Y/\XX)})=1$, hence $f(P,Y/\XX)=kf(Q,Y/\XX)$ for some $k\geq 1$. Now consider the path $P'=\pi^{-1}(Q^{f(Q,Y/\XX)},N(\overline{f_n}))$. By Lem.~\ref{lem:lift} this path is closed, and the lift of $(P')^k$ is equal to
$$
\pi^{-1}((P')^k,N(\overline{f_n}))=\pi^{-1}(P',N(\overline{f_n}))^k.
$$
It follows that $\pi((P')^k)=Q^{f(P,Y/\XX)}=\pi(P)$, so $P$ and $(P')^k$ are conjugate, and comparing the last half-edge we see that in fact $P=(P')^k$. Since $P$ is primitive we have $k=1$ and therefore $f(P,Y/\XX)=f(Q,Y/\XX)$. This proves~\ref{item:prime1}, and~\ref{item:prime2} follows immediately.

Now let $P$ and $P'$ be two primitive paths over $Q$. We have already seen that $\pi(P')=Q^{f(Q,Y/\XX)}=\pi(P)$, hence $P$ and $P'$ are conjugate by Lem.~\ref{lem:image}. Conversely, $\pi(gP)=\pi(P)=Q^{f(Q,Y/\XX)}$ for any $g\in G$, proving~\ref{item:prime3}. Finally, to prove~\ref{item:prime4}, we observe that there are exactly $d$ primitive paths in $Y$ lying over $Q$, namely the lifts $\pi^{-1}(Q^{f(Q,Y/\XX)},g)$ as $g$ ranges over $G$. Therefore, over all of the $\ell(\fq)$ representatives of $Q$ there are a total of $d\ell(\fq)$ primitive paths in $Y$, each having length $f(Q,Y/\XX)\ell(\fq)$, for a total of $d/f(Q,Y/\XX)$ distinct primes.

\end{proof}

We see that the behavior of primes in an edge-free quotient $\pi:Y\to Y/\!\!/G$ by a group $G$ of order $d$ is exactly the same as the splitting of prime ideals in an unramified Galois extension of degree $d$. To reinforce the analogy with number fields, we write
$$
\pi^*(\fq)=\fp_1\cdots \fp_g,
$$
where $\fp_1,\ldots,\fp_g$ are the primes over $\fq$.

\begin{remark} Let $Q$ be a primitive path in $\XX$, and let $k$ be the smallest positive integer such that $Q^k$ lifts to a closed path in $Y$. Then $k$ divides $f(Q,Y/\XX)$ but is not generally equal to it. For example, if $F(Q)$ is a nontrivial element of the stabilizer group of the initial vertex of $Q$, then $Q$ itself lifts to a closed path $P$ (see Ex.~\ref{ex:K4modC3} and~\ref{ex:K4modA4}). However, in this case a power $Q^j$ lifts to $P\cdot F(Q)P\cdot \cdots F(Q)^{j-1}P$, hence the primitive path of $Y$ that lies over $Q$ is $P\cdot F(Q)P\cdots F(Q)^{f-1}P$, not $P$.
\label{rem:shortestclosed}
\end{remark}

We illustrate our results by determining the splitting type of primes of small length in two edge-free quotients of $K_4$ (see Ex.~\ref{ex:K4}), the complete graph on four vertices. We label the vertices of $K_4$ by $1$, $2$, $3$, and $4$, and denote the edges $e_{ij}=(h_{ij},h_{ji})$ for $i\neq j$, so that $r(h_{ij})=i$. A closed path in $K_4$ can be specified by its vertex sequence (skipping the last vertex), so for example we denote the closed path $e_{12}e_{23}e_{31}$ by $123$. Enumerating the primes of $K_4$ of length five or less, we find that there are no primes of length one or two, 8 primes of length three of the form $abc$, 6 primes of length four of the form $abcd$, and no primes of length five. 

\begin{example} \label{ex:K4modC3} 

We first consider the subgroup $C_3\subset A_4$ generated by $g=(234)$. The quotient graph $X=A_4/C_3$ has two vertices $u$ and $v$ joined by an edge $e=\{h,\oh\}$, where $r(h)=u$ and $r(\oh)=v$, and a loop $\{h',\oh'\}$ at $v$. The stabilizer at $u$ is $C_3$, while the stabilizer at $v$ is trivial. The map $\pi:K_4\to X$ sends $1$ to $u$, the remaining vertices to $v$, $h_{1j}$ to $h$, and $h_{23}$, $h_{34}$, and $h_{41}$ to $h'$. 

\begin{figure}[h]
\centering
\begin{tikzcd}
\begin{tikzpicture}
\draw[fill](0,0) circle(1mm) node[left]{$1$};
\draw[fill](2.0,0.5) circle(1mm) node[right]{$3$};
\draw[fill](1.5,-0.3) circle(1mm) node[below]{$4$};
\draw[fill](0.9,1.5) circle(1mm) node[above]{$2$};
\draw [ultra thick] (0,0) -- (2.0,0.5);
\draw [ultra thick] (0,0) -- (1.5,-0.3);
\draw [ultra thick] (0,0) -- (0.9,1.5);
\draw [ultra thick] (2.0,0.5) -- (1.5,-0.3);
\draw [ultra thick] (2.0,0.5) -- (0.9,1.5);
\draw [ultra thick] (1.5,-0.3) -- (0.9,1.5);
\end{tikzpicture}
\arrow{rr}{C_3} && 
\begin{tikzpicture}
\draw [ultra thick] (0,0) -- (-1.5,0);
\draw [ultra thick] (0,0) .. controls (0.5,0.3) and (0.6,0.1) .. (0.6,0);
\draw [ultra thick] (0,0) .. controls (0.5,-0.3) and (0.6,-0.1) .. (0.6,0);
\draw[fill](0,0) circle(1mm) node[above]{$v$};
\draw[fill](-1.5,0) circle(1mm) node[below]{$\{1,g,g^2\}$} node[above]{$u$};
\end{tikzpicture} 
\end{tikzcd}
\end{figure}
We lift the spanning tree $\{u,v\}\cup\{h,\oh\}$ to $T=\{1,2\}\cup \{h_{12},h_{21}\}$, so that $h^S=h_{12}$ and $\oh^S=h_{21}$, and set $h'^S=h_{23}$ and $\oh'^S=h_{24}$. With respect to these choices, the sheets are
$$
S_1=\{1,2\}\cup\{h_{12}, h_{2j}\},\quad S_g=\{1,3\}\cup\{h_{13}, h_{3j}\},\quad S_{g^2}=\{1,4\}\cup\{h_{14}, h_{4j}\},
$$
and the Frobenius elements are the following:
$$
F(h)=1,\quad F(\oh)=1,\quad F(h')=g,\quad F(\oh')=g^2.
$$ 

We now find all primes $\fq$ in $\XX$ with the property that $\ell(\fq)f(\fq)\leq 5$. The length one primes $h'$ and $\oh'$ have nontrivial Frobenius elements, hence $f(h',Y/\XX)=f(\oh',Y/\XX)=3$ and therefore they each have a unique prime over them. Indeed, we see that $(h')^3$ and $(\oh')^3$ lift to respectively $234$ and $243$, which are fixed by $C_3$. There are no primes of length two, and four primes $ghh'\oh$, $g^2hh'\oh$, $gh\oh'\oh$, and $g^2h\oh'\oh$ of length three. The prime $gh\oh'\oh$ has trivial Frobenius element, so $f(gh\oh'\oh)=1$ and the three lifts of $gh\oh'\oh$ are the conjugate primes $132$, $143$, and $124$. Similarly, the primes over $g^2hh'\oh$ are $123$, $134$, and $142$. Finally, $\XX$ has two primes $ghh'^2\oh$ and $g^2h\oh'^2\oh$ of length four that have trivial Frobenius element, accounting for the six primes of $Y$ of length four, and no primes of length five have trivial Frobenius element. We list the prime splittings below:
$$
\pi^*(h')=234,\quad \pi^*(\oh')=243,\quad \pi^*(gh\oh'\oh)=132\cdot 143\cdot 124,\quad \pi^*(g^2hh'\oh)=132\cdot 143\cdot 124,
$$
$$
\pi^*(ghh'^2\oh)=1342\cdot 1423\cdot 1234,\quad \pi^*(g^2h\oh'^2\oh)=1243\cdot 1324\cdot 1432.
$$
We note that although the three lifts of the prime $ghh'\oh$ are the prime paths $123$, $134$, and $142$, the fact that $F(ghh'\oh)=g^2$ tells us that $f(ghh'\oh)=3$ and hence that $ghh'\oh$ has a unique prime of length nine over it, namely $123134142$ (see Rem.~\ref{rem:shortestclosed}).

\end{example}

\begin{example} \label{ex:K4modA4}
We also consider the quotient $X=K_4/A_4$, which consists of a single vertex $u$ with a leg $l$. Let $T=\{1\}$ and let $S_1=\{1,h_{12}\}$ be the identity sheet, so that $S_x=\{x(1),h_{x(1)x(2)}\}$ for any $x\in A_4$. The stabilizer at $u$ is generated by $g=(234)$, and $F(l)=(12)(34)$, which is the unique element of $A_4$ mapping $h_{21}$ to $h_{12}$. 

The quotient $\XX$ has two primes of length one, $gl$ and $g^2l$. The Frobenius elements $F(gl)=(132)$ and $F(g^2l)=(142)$ have order 3, therefore there are four primes of $K_4$ of length three over each of them, which accounts for all primes $abc$ of $K_4$ of length three. There is a unique prime of length two, namely $glg^2l$. We see that $F(glg^2l)=(14)(23)$ has order 2, so the primes in $K_4$ over $glg^2l$ are the six primes $abcd$ of length four, which are all conjugate by the $A_4$-action. There are no remaining primes of $K_4$ of length less than six, and a laborious calculation shows that all primes of $\XX$ of lengths three, four, and five indeed have nontrivial Frobenius elements.

\end{example}

\subsection{The $L$-function of an edge-free quotient}

In this section, we define the $L$-function of an edge-free quotient, in complete analogy to free Galois coverings.

\begin{definition} Let $Y$ be a finite graph with an edge-free $G$-action, let $Y/\!\!/G=\XX$ be the quotient graph of groups, and let $\rho$ be a representation of $G$ of degree $d$. The {\it Artin--Ihara $L$-function} of the covering $Y/\XX$ is
$$
L(u,\rho,Y/\XX)=\prod_{\fq} \det\left(I_d-\rho(F(Q))u^{\ell(Q)}\right)^{-1},
$$
where the product is taken over all primes $\fq$ of $\XX$, for each of which we choose an arbitrary representative $Q$.
\end{definition}

Jus as the zeta function of a graph, this product is usually infinite but is in fact the reciprocal of a polynomial in $u$ (see Thm.~\ref{thm:threetermgogL}). We note that if $Q$ and $Q'$ are representatives of $\fq$, then the Frobenius elements $F(Q)$ and $F(Q')$ are conjugate, hence the determinant in the product is well-defined.

As with the $L$-function of a free Galois covering, it is clear that
$$
L(u,\rho_1\oplus \rho_2,Y/\XX)=L(u,\rho_1,Y/\XX)L(u,\rho,Y/\XX)
$$
for any two representations $\rho_1$ and $\rho_2$, and that 
$$
L(u,1_G,Y/\XX)=\zeta(u,\XX)
$$
for the trivial representation $1_G$.

Our principal result concerning the $L$-function of an edge-free covering is the generalization of~\eqref{eq:Linduction}.

\begin{proposition} Let $Y$ be a finite graph with an edge-free $G$-action, let $Y/\!\!/G=\XX$ be the quotient graph of groups, and let $\rho_G$ be the right regular representation of $G$. Then
$$
\zeta(u,Y)=L(u,\rho_G,Y/\XX).
$$

\end{proposition}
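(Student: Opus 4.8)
\section*{Proof proposal}

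The plan is to compare the defining Euler products of the two sides factor by factor, indexing by the primes $\fq$ of $\XX$, using Theorem~\ref{thm:primes} to describe the primes of $Y$ lying over each $\fq$ together with the standard formula for the characteristic polynomial of a permutation matrix.

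First I would note that every prime $\fp$ of $Y$ lies over a unique prime $\fq$ of $\XX$: if $P$ represents $\fp$, then by Lemma~\ref{lem:image} the image $\pi(P)$ is a closed reduced path in $\XX$, hence equal to $Q^f$ for a unique primitive path $Q$, and the class $[Q]$ is independent of the chosen representative $P$ (as shown in the lemma immediately preceding Theorem~\ref{thm:primes}). Thus the primes of $Y$ are partitioned into the fibres $\pi^*(\fq)=\{\fp_1,\dots,\fp_g\}$, and since both Euler products converge for small $u$ we may regroup
$$
\zeta(u,Y)=\prod_{\fp}(1-u^{\ell(\fp)})^{-1}=\prod_{\fq}\prod_{i=1}^{g}(1-u^{\ell(\fp_i)})^{-1},
$$
where the outer product runs over the primes of $\XX$. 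Writing $f=f(\fq,Y/\XX)$, Theorem~\ref{thm:primes} gives $g=d/f$ and $\ell(\fp_i)=f\,\ell(\fq)$ for each $i$, so the inner product equals $(1-u^{f\ell(\fq)})^{-d/f}$.

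Next I would compute the matching factor of the $L$-function. Fix a representative $Q$ of $\fq$; by Theorem~\ref{thm:primes}\eqref{item:prime1} the element $F(Q)\in G$ has order $f=f(\fq,Y/\XX)$. The matrix $\rho_G(F(Q))$ is the permutation matrix of right translation $x\mapsto xF(Q)$ on $G$, and this permutation is a product of $d/f$ disjoint cycles each of length $f$; hence $\det\bigl(I_d-\rho_G(F(Q))t\bigr)=(1-t^{f})^{d/f}$. Substituting $t=u^{\ell(Q)}=u^{\ell(\fq)}$ gives
$$
\det\bigl(I_d-\rho_G(F(Q))u^{\ell(Q)}\bigr)^{-1}=(1-u^{f\ell(\fq)})^{-d/f},
$$
which is exactly the factor of $\zeta(u,Y)$ attached to the primes over $\fq$. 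Taking the product over all primes $\fq$ of $\XX$ yields $L(u,\rho_G,Y/\XX)=\zeta(u,Y)$.

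I do not expect a serious obstacle: the combinatorial content is already carried by Theorem~\ref{thm:primes}. The only points needing care are the elementary identity for the characteristic polynomial of a permutation matrix and the legitimacy of rearranging the infinite Euler product, for which it suffices that each prime of $Y$ lies over exactly one prime of $\XX$ and that both products converge absolutely for small $u$, as already observed for each separately.
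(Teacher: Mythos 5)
Your proof is correct. It takes a genuinely different, though closely related, route from the paper's. The paper proves the identity by taking logarithms of both sides, using the Mercator series $\log\det(I-M)^{-1}=\sum_{j\geq 1}\tr(M^j)/j$, and then invoking the trace formula for the regular representation, $\tr\rho_G(g)=d$ if $g=1$ and $0$ otherwise, so that only the powers $Q^{kf(Q)}$ with trivial Frobenius survive in the double sum; the resulting power series is then matched against $\log\zeta(u,Y)$ via Theorem~\ref{thm:primes}. You instead work directly at the level of Euler products, partitioning the primes of $Y$ into fibres $\pi^*(\fq)$ and computing each local factor in closed form via the identity $\det(I_d-\rho_G(F(Q))t)=(1-t^{f})^{d/f}$, which is the characteristic-polynomial formula for the permutation matrix of a free $\ZZ/f\ZZ$-action on a set of size $d$. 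The two facts you and the paper use about $\rho_G$ — the trace criterion and the factorization of the characteristic polynomial into cyclotomic-type factors — are of course two faces of the same coin, but your route avoids the $\log$/$\exp$ manipulations and the need to interchange a double infinite sum, replacing them with a single regrouping of an absolutely convergent product, which is a cleaner and more conceptual derivation for the specific case of the regular representation. What the paper's logarithmic approach buys, on the other hand, is that it is the template used throughout the paper (for instance in Propositions~\ref{prop:twoterm} and~\ref{prop:2termL}) and generalizes uniformly to arbitrary representations, whereas your permutation-matrix computation is special to $\rho_G$.
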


\begin{proof} The proof is based on the relationship between the primes of $Y$ and $\XX$ established in Thm.~\ref{thm:primes}. Since this relationship is the same as in the free case, the proof is essentially identical (see part 3 of Prop.~18.10 and Sec.~19.3 in~\cite{2011Terras}, specialized to $H=1$).  Let $d=\#(G)$ be the degree of $\rho_G$. Consider the logarithm
$$
\log L(u,\rho_G,Y/\XX)=-\sum_{\fq}\log \det \left(I_d-\rho_G(F(Q))u^{\ell(Q)}\right)=\sum_{\fq} \sum_{j=1}^{\infty} \frac{1}{j} \tr \rho_G(F(Q^j)) u^{j\ell(Q)},
$$
where we have chosen a representative $Q$ for each prime $\fq$ of $\XX$. Since $G$ acts freely on itself, $\tr \rho_G(F(Q^j))=0$ unless $F(Q^j)=1$, in which case $\tr \rho_G(F(Q^j))=d$. But $F(Q^j)=1$ if and only if $j=kf(Q)$, where $f(Q)$ is the order of the Frobenius element of $Q$. Hence
$$
\log L(u,\rho_G,Y/\XX)=\sum_{\fq} \sum_{k=1}^{\infty} \frac{d}{kf(Q)} u^{kf(Q)\ell(Q)}.
$$
By Thm.~\ref{thm:primes}, there are $g(Q)=d/f(Q)$ primitive paths $P_1,\ldots,P_g$ in $Y$ over $Q$, each having length $\ell(P_i)=f(Q)\ell(Q)$. Furthermore, each prime $\fp$ lying over $\fq$ is represented by exactly one of the $P_i$. Hence we can change to summing over all primes $\fp$ of $Y$:
$$
\log L(u,\rho_G,Y/\XX)=\sum_{\fq}\sum_{k=1}^{\infty} \sum_{i=1}^{g(Q)}\frac{1}{k} u^{k\ell(P_i)}=\sum_{\fp}\sum_{k=1}^{\infty} \frac{1}{k} u^{k\ell(\fp)},
$$
which, as in the proof of Prop.~\ref{prop:twoterm}, is the logarithm of $\zeta(u,Y)$.

\end{proof}

We finally obtain the divisibility result for the zeta function of an edge-free quotient.

\begin{corollary}  Let $Y$ be a finite graph with an edge-free $G$-action and let $\XX=Y/\!\!/G$ be the quotient graph of groups. Then the zeta function $\zeta(u,\XX)$ divides the zeta function $\zeta(u,Y)$.

\label{cor:divisibility}
\end{corollary}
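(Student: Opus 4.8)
The plan is to deduce the divisibility directly from the two identities that have just been established for the Artin--Ihara $L$-function of the edge-free covering $Y/\XX$. Recall that the right regular representation $\rho_G$ of $G$ decomposes as a direct sum containing the trivial representation $1_G$ as a summand with multiplicity one; write $\rho_G = 1_G \oplus \sigma$ for some representation $\sigma$ of $G$. This is the only genuinely new input needed, and it is a standard fact from the representation theory of finite groups.

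Next I would apply multiplicativity of the $L$-function in the representation variable, $L(u,\rho_1\oplus\rho_2,Y/\XX) = L(u,\rho_1,Y/\XX)\,L(u,\rho_2,Y/\XX)$, which was observed immediately before the preceding proposition. Combined with the decomposition $\rho_G = 1_G\oplus\sigma$, this gives
$$
L(u,\rho_G,Y/\XX) = L(u,1_G,Y/\XX)\cdot L(u,\sigma,Y/\XX).
$$
Now substitute the two evaluations already proved: $L(u,1_G,Y/\XX) = \zeta(u,\XX)$ (the trivial-representation identity stated just above the proposition) and $L(u,\rho_G,Y/\XX) = \zeta(u,Y)$ (the proposition itself). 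This yields $\zeta(u,Y) = \zeta(u,\XX)\cdot L(u,\sigma,Y/\XX)$.

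It remains only to check that $L(u,\sigma,Y/\XX)$ is itself the reciprocal of a polynomial in $u$ — equivalently a rational function with polynomial reciprocal — so that the equation $\zeta(u,Y)^{-1} = \zeta(u,\XX)^{-1}\,L(u,\sigma,Y/\XX)^{-1}$ is an honest factorization of polynomials and $\zeta(u,\XX)^{-1}$ divides $\zeta(u,Y)^{-1}$ in the polynomial ring. This follows from the three-term determinant formula for $L$-functions (Thm.~\ref{thm:threetermgogL}), or more elementarily from the analogue of the two-term determinant formula, which expresses $L(u,\sigma,Y/\XX)^{-1}$ as $\det$ of a matrix with polynomial entries. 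Alternatively, one can simply note $L(u,\sigma,Y/\XX)^{-1} = \zeta(u,Y)^{-1}/\zeta(u,\XX)^{-1}$ and invoke that the quotient of the two polynomial reciprocals is a priori a rational function whose only possible poles are cancelled — but cleanest is to cite the determinant formula directly.

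I expect no serious obstacle here: the entire argument is the graph-of-groups transcription of the classical Aramata--Brauer-style deduction, and every ingredient (multiplicativity of $L$ in $\rho$, the trivial-representation evaluation, and the regular-representation evaluation $\zeta(u,Y) = L(u,\rho_G,Y/\XX)$) is already in hand. The only point requiring a word of care is the polynomiality of $L(u,\sigma,Y/\XX)^{-1}$, needed to upgrade a rational-function identity to a divisibility of polynomials; this is handled by the determinant formula for $L$-functions and is the reason Thm.~\ref{thm:threetermgogL} is invoked.
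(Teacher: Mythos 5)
Your proof is correct and follows essentially the same route as the paper: decompose the regular representation so that $1_G$ splits off, use multiplicativity of the $L$-function together with $L(u,1_G,Y/\XX)=\zeta(u,\XX)$ and $L(u,\rho_G,Y/\XX)=\zeta(u,Y)$, and conclude. The only cosmetic differences are that the paper writes out the full irreducible decomposition $\rho_G=\bigoplus d_\rho\rho$ rather than just $1_G\oplus\sigma$, and that you are more explicit than the paper about invoking the determinant formula (Thm.~\ref{thm:threetermgogL}) to guarantee $L(u,\sigma,Y/\XX)^{-1}$ is a polynomial, which is indeed the point needed to make ``divides'' literal.
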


\begin{proof} This follows immediately from the above theorem. Indeed, let $R_G$ be the set of irreducible representations of $G$. The regular representation $\rho_G$ splits, and therefore $\zeta(y,Y)$ factors, as follows:
$$
\rho_G=\bigoplus_{\rho\in R_G} d_{\rho}\rho,\quad \zeta(y,Y)=L(u,\rho_G,Y/\XX)=\prod_{\rho\in R_G}L(u,\rho,Y/\XX)^{d_{\rho}},
$$
and $\zeta(u,\XX)$ is the factor corresponding to the trivial representation.

\end{proof}

We now derive two-term and three-term determinant formulas for the $L$-function of an edge-free quotient, generalizing the corresponding results for free Galois coverings (see Chapters 18 and 19 of~\cite{2011Terras}). Let $Y/\XX$ be an edge-free quotient with Galois group $G$, and let $\rho$ be a representation of $G$ of degree $d$. Let $n=\#(V(X))$ and $k=\#(H(X))$ denote respectively the number of vertices and half-edges of $X$. We define the {\it Artinized charge} of a vertex $v\in V(X)$ to be the $d\times d$ matrix
$$
c_{\rho}(v)=\sum_{g\in X_v}\rho(g),
$$
and we consider the $kd\times kd$ block matrix $W_{\rho}$ whose $(h,h')$ block is
$$
(W_{\rho})_{hh'}=\left\{\begin{array}{cc}
(c_{\rho}(r(h'))-I_d)\rho(F(h')), & h'=\oh,\\
c_{\rho}(r(h'))\rho(F(h')), & h'\neq \oh\mbox{ and }r(\oh)=r(h'), \\
0, & r(\oh)\neq r(h').
\end{array}\right.
$$

\begin{proposition} Let $Y$ be a finite graph with an edge-free $G$-action, let $\XX=Y/\!\!/G$ be the quotient graph of groups, and let $\rho_G$ be a representation of $G$ of degree $d$. The $L$-function of $Y/\XX$ evaluated at $\rho$ is equal to
\label{prop:2termL}
$$
L(u,\rho,Y/\XX)^{-1}=\det (I_{kd}-W_{\rho}u).
$$

\end{proposition}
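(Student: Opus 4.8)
The plan is to imitate, almost verbatim, the proof of Proposition~\ref{prop:twoterm}, replacing the scalar counts $W_{hh'}$ by the matrix blocks $(W_\rho)_{hh'}$ and the number of closed reduced paths of length $n$ by the quantity $\sum_{\ell(R)=n}\tr\rho(F(R))$. First I would take the logarithm of the $L$-function and expand each Euler factor. Since $P\mapsto F(P)$ is a homomorphism on the path algebra of $\XX$, we have $F(Q^j)=F(Q)^j$, so
$$
\log L(u,\rho,Y/\XX)=-\sum_{\fq}\log\det\!\big(I_d-\rho(F(Q))u^{\ell(Q)}\big)=\sum_{\fq}\sum_{j=1}^{\infty}\frac{1}{j}\,\tr\rho(F(Q^j))\,u^{j\ell(Q)}.
$$
Exactly as in Proposition~\ref{prop:twoterm}, a prime $\fq$ of length $\ell(\fq)$ is represented by $\ell(\fq)$ primitive paths, and every closed reduced path of $\XX$ is a unique power of a primitive one; since $F(Q^j)=F(Q)^j$ and $\ell(Q^j)=j\ell(Q)$, I can pass first to a sum over primitive paths (dividing by $\ell(Q)$) and then to a sum over all closed reduced paths $R$ of $\XX$:
$$
\log L(u,\rho,Y/\XX)=\sum_{R}\frac{1}{\ell(R)}\,\tr\rho(F(R))\,u^{\ell(R)}=\sum_{n=1}^{\infty}\frac{u^n}{n}\sum_{\ell(R)=n}\tr\rho(F(R)),
$$
where all manipulations are formal identities of power series, valid for small $u$ just as before.

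The heart of the argument is then the identity $\sum_{\ell(R)=n}\tr\rho(F(R))=\tr W_\rho^n$. I would write $\tr W_\rho^n=\sum_{h_1,\ldots,h_n}\tr\!\big[(W_\rho)_{h_1h_2}(W_\rho)_{h_2h_3}\cdots(W_\rho)_{h_nh_1}\big]$, the sum over all length-$n$ sequences of half-edges of $X$. A block $(W_\rho)_{h_jh_{j+1}}$ vanishes unless $r(\oh_j)=r(h_{j+1})$, so only sequences underlying a closed path $h_1\cdots h_n$ in $X$ survive; for such a sequence, setting $v_j=r(\oh_j)=r(h_{j+1})$, one has $(W_\rho)_{h_jh_{j+1}}=\big(\sum_{g_j}\rho(g_j)\big)\rho(F(h_{j+1}))$, where $g_j$ ranges over $X_{v_j}$, subject to $g_j\neq 1$ precisely when $h_{j+1}=\oh_j$ — this is exactly the dichotomy between $c_\rho(v)$ and $c_\rho(v)-I_d$. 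Expanding the product of blocks, cyclically rotating the trace so that the last factor $\rho(g_n)\rho(F(h_1))$ moves to the front, and relabelling the wrap-around element $g_n$ as $g_0$, each term becomes $\tr\rho\big(g_0F(h_1)g_1F(h_2)\cdots g_{n-1}F(h_n)\big)=\tr\rho(F(R))$ for the closed path $R=g_0h_1g_1h_2\cdots g_{n-1}h_n$; and the conditions $g_j\neq 1$ when $h_{j+1}=\oh_j$, read cyclically as in Remark~\ref{rem:indexing} (including the case $j=n$, i.e. $g_0\neq 1$ when $h_1=\oh_n$), are precisely the reducedness conditions on $R$. Hence $\tr W_\rho^n=\sum_{\ell(R)=n}\tr\rho(F(R))$, and
$$
\log L(u,\rho,Y/\XX)=\tr\sum_{n=1}^{\infty}\frac{1}{n}(W_\rho u)^n=-\tr\log(I_{kd}-W_\rho u)=\log\det(I_{kd}-W_\rho u)^{-1},
$$
which gives the claimed formula.

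I expect the only real obstacle to be the bookkeeping in this last identification: making sure the Frobenius factors $\rho(F(h_j))$ occupy the correct slots in the expanded trace, so that a single cyclic rotation together with the relabelling $g_n\mapsto g_0$ reproduces $F(R)=g_0F(h_1)g_1\cdots g_{n-1}F(h_n)$ exactly, and that the cyclic interpretation of the reducedness constraint at $j=n$ matches the constraint in the definition of a reduced path in $\XX$. As a sanity check I would verify the specialization $\rho=1_G$: there $c_\rho(v)=c(v)$ and $\rho(F(h))=1$, so $W_\rho=W$ and the statement collapses to Proposition~\ref{prop:twoterm} via $L(u,1_G,Y/\XX)=\zeta(u,\XX)$.
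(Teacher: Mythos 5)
Your proposal is correct and follows essentially the same line as the paper's proof: take the logarithm, reduce to a sum over closed reduced paths of $\XX$, then match $\sum_{\ell(R)=n}\tr\rho(F(R))$ with $\tr W_\rho^n$ by expanding the blocks of $W_\rho$ and using cyclic invariance of the trace (together with the relabelling $g_n\mapsto g_0$, which the paper does implicitly by writing $(W_\rho)_{h_nh_1}$ with the element $g_0\in X_{r(h_1)}$). The bookkeeping you flagged — that $(W_\rho)_{h_jh_{j+1}}$ carries $\rho(F(h_{j+1}))$, so the wrap-around factor $\rho(g_n)\rho(F(h_1))$ must be cycled to the front — is exactly the point, and your treatment of the cyclic reducedness condition at $j=n$ matches Remark~\ref{rem:indexing}.
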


\begin{proof} The proof is similar to that of Prop.~\ref{prop:twoterm}. We take the logarithm of the $L$-function and change to a sum over all primitive paths $Q$, and then to a sum over all reduced paths $C$:
$$
\log L(u,\rho,Y/\XX)=-\sum_{\fq} \log \det \left(I_d-\rho(F(Q))u^{\ell(Q)}\right)=-\sum_{\fq} \tr \log \left(I_d-\rho(F(Q))u^{\ell(Q)}\right)=
$$
$$
=\sum_{\fq}\sum_{j=1}^{\infty}\frac{1}{j}\tr \rho (F(Q))^ju^{j\ell(Q)}=\sum_Q\sum_{j=1}^{\infty}\frac{1}{j\ell(Q)}\tr \rho (F(Q))^ju^{j\ell(Q)}=\sum_C\frac{1}{\ell(C)}\tr \rho (F(C))u^{\ell(C)}.
$$
Fix a closed, but not necessarily reduced, path $C_X=h_1h_2\cdots h_n$ in $X$, and consider all reduced paths $C=g_0h_1g_1h_2\cdots g_{n-1}h_n$ in $\XX$ whose underlying half-edge path is $C_X$. These paths are obtained by choosing $g_j\in X'_{r(\oh_j)}$ arbitrarily, where $X'_{r(\oh_j)}=X_{r(\oh_j)}\backslash\{1\}$ if $h_{j+1}=\oh_j$ and $X'_{r(\oh_j)}=X_{r(\oh_j)}$ otherwise. It follows that the sum over all paths $C$ with underlying half-edge path $C_X$ is equal to
$$
\sum\frac{1}{n}\tr \rho (F(C))u^n=\sum_{g_j\in X'_{r(\oh_j)}}\frac{1}{n}\tr (\rho(g_0)\rho (F(h_1)) \rho(g_1)\cdots \rho(g_{n-1})\rho (F(h_n)))=
$$
$$
=\frac{1}{n} \tr ((W_{\rho})_{h_1h_2}(W_{\rho})_{h_2h_3}\cdots (W_{\rho})_{h_nh_1}),
$$
where each $(W_{\rho})_{h_jh_{j+1}}$ is a $d\times d$ block of the matrix $W_{\rho}$. If we now sum this expression over all closed half-edge paths $C_X$ in $X$, we obtain $\dfrac{1}{n} \tr (W_{\rho})^n$. Therefore 
$$
\log L(u,\rho,Y/\XX)=\sum_{C:\ell(C)=n}\frac{1}{n} \tr \rho (F(C))u^n=\sum_{n=1}^{\infty}\frac{1}{n} \tr(W_{\rho})^nu^n=\tr \log (I_{kd}-W_{\rho}u)^{-1},
$$
and exponentiating completes the proof.

\end{proof}

Finally, we derive a three-term determinant formula for $L(u,\rho,Y/\XX)$. Define the following $nd\times nd$ matrices:
\begin{enumerate}

\item $A_{\rho}$ is the {\it Artinized adjacency matrix,} with $d\times d$ blocks of the form
$$
(A_{\rho})_{uv}=\sum_{r(h)=u,r(\oh)=v} \rho (F(h)).
$$

\item $Q_{\rho}=Q\otimes I_d$ is the {\it Artinized valency matrix}, where $Q$ is the valency matrix of $X$. 

\item $(C_\rho)_{uv}=\delta_{uv}c_{\rho}(v)$ is the {\it Artinized charge matrix}. 

\end{enumerate}

We also consider the $d\times d$ matrices $\rho(F(l))$ for all legs $l\in L(X)$. Since $F(l)^2=1$, these matrices are diagonalizable with eigenvalues $\pm 1$, and we denote by respectively $l_{\rho,+}$ and $l_{\rho,-}$ the total number of $+1$ and $-1$ eigenvalues of all the $\rho(F(l))$, so that $l_{\rho,+}+l_{\rho,-}=ld$.

\begin{theorem} Let $Y$ be a finite graph with an edge-free $G$-action, let $\XX=Y/\!\!/G$ be the quotient graph of groups, and let $\rho_G$ be a representation of $G$ of degree $d$.  The $L$-function of $Y/\XX$ evaluated at $\rho$ is equal to
\begin{equation}
L(u,\rho,Y/\XX)^{-1}=(1-u^2)^{(b_1(X)-1)d}(1+u)^{l_{\rho,+}}(1-u)^{l_{\rho,-}}
\det(I_{nd}-C_{\rho}A_{\rho}u+(C_{\rho}Q_{\rho}-I_{nd})u^2),
\end{equation}
where $b_1(X)=m-n+1$ is the first Betti number of $X$.
\label{thm:threetermgogL}

\end{theorem}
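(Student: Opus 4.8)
The plan is to \emph{Artinize} the proof of Theorem~\ref{thm:3term}, that is, to run Bass's argument with $\rho$-valued matrices, taking as input the two-term formula $L(u,\rho,Y/\XX)^{-1}=\det(I_{kd}-W_\rho u)$ of Proposition~\ref{prop:2termL}. As in Theorem~\ref{thm:3term} I would first order the half-edges of $X$ so that $\oh_j=h_{j+m}$ for $j=1,\dots,m$ and $\oh_j=h_j$ for $j=2m+1,\dots,k$; with this ordering the ``involution-with-Frobenius'' matrix appearing below is block-structured, with one $2d\times 2d$ off-diagonal block per edge and one $d\times d$ diagonal block per leg, which is exactly what makes its determinant computable.

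The next step is to introduce the Artinized analogues of the auxiliary matrices $S,T,J,D$ of Theorem~\ref{thm:3term}. Since $\rho(F(h))$ is not a permutation, a literal transpose no longer behaves correctly, so I would define \emph{four} matrices directly: the $nd\times kd$ start matrix $S_\rho$ with blocks $(S_\rho)_{vh}=I_d$ if $v=r(h)$ and $0$ otherwise; two $kd\times nd$ matrices, $U_\rho$ with $(U_\rho)_{hv}=I_d$ if $v=r(\oh)$ and $0$ otherwise, and $V_\rho$ with $(V_\rho)_{hv}=\rho(F(\oh))=\rho(F(h))^{-1}$ if $v=r(h)$ and $0$ otherwise; the block-diagonal $kd\times kd$ matrix $D_\rho$ with $(D_\rho)_{hh}=c_\rho(r(h))\rho(F(h))$; and the $kd\times kd$ matrix $J_\rho$ with $(J_\rho)_{hh'}=\rho(F(\oh))$ if $h'=\oh$ and $0$ otherwise. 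A sequence of routine block computations, using only that $\rho$ is a homomorphism and $F(\oh)=F(h)^{-1}$ (so $F(\oh)F(h)=1$), then yields
$$
U_\rho S_\rho D_\rho=W_\rho+J_\rho,\qquad J_\rho U_\rho=V_\rho,\qquad J_\rho V_\rho=U_\rho,\qquad S_\rho D_\rho U_\rho=C_\rho A_\rho,\qquad S_\rho D_\rho V_\rho=C_\rho Q_\rho .
$$
For the first identity, one checks that both sides have $(h,h')$-block equal to $c_\rho(r(h'))\rho(F(h'))$ exactly when $r(\oh)=r(h')$, which is where the definition of $W_\rho$ forces the placement of the Frobenius elements.

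With these in hand I would verify, by a direct multiplication using precisely the five identities above, the block matrix identity
$$
\begin{pmatrix} I_{nd} & 0\\ U_\rho & I_{kd}\end{pmatrix}
\begin{pmatrix} (1-u^2)I_{nd} & S_\rho D_\rho u\\ 0 & I_{kd}-W_\rho u\end{pmatrix}
=
\begin{pmatrix} I_{nd}-C_\rho A_\rho u+(C_\rho Q_\rho-I_{nd})u^2 & S_\rho D_\rho u\\ 0 & I_{kd}+J_\rho u\end{pmatrix}
\begin{pmatrix} I_{nd} & 0\\ U_\rho-V_\rho u & I_{kd}\end{pmatrix},
$$
and then take determinants. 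The two triangular factors have determinant $1$, so Proposition~\ref{prop:2termL} gives
$$
(1-u^2)^{nd}\,L(u,\rho,Y/\XX)^{-1}=\det\bigl(I_{nd}-C_\rho A_\rho u+(C_\rho Q_\rho-I_{nd})u^2\bigr)\cdot\det(I_{kd}+J_\rho u).
$$
It remains to evaluate $\det(I_{kd}+J_\rho u)$ via the half-edge ordering: each of the $m$ edges contributes a $2d\times 2d$ block of determinant $\det\bigl(I_d-\rho(F(\oh))\rho(F(h))u^2\bigr)=(1-u^2)^d$ since $F(\oh)F(h)=1$, and each leg $l$ contributes $\det(I_d+\rho(F(l))u)$, which splits as $(1+u)$ to the number of $+1$-eigenvalues of $\rho(F(l))$ times $(1-u)$ to the number of $-1$-eigenvalues — here one uses that $F(l)^2=1$ forces $\rho(F(l))$ to be diagonalizable with eigenvalues $\pm1$. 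Summing these eigenvalue multiplicities over all legs yields $\det(I_{kd}+J_\rho u)=(1-u^2)^{md}(1+u)^{l_{\rho,+}}(1-u)^{l_{\rho,-}}$; substituting and cancelling $(1-u^2)^{nd}$ gives the claimed formula with exponent $(m-n)d=(b_1(X)-1)d$.

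The main obstacle I expect is the second paragraph: pinning down the correct Artinized auxiliary matrices — which half-edge's Frobenius element each block should carry, and on which side of the charge it sits — so that all five identities hold at once; the non-commutativity of $G$ rules out the naive ``$\otimes I_d$'' guesses, and the bookkeeping is dictated entirely by the structure of $W_\rho$. Everything after that is formal. The treatment of legs in the final determinant, where $\rho(F(l))$ is a sign-type involution rather than a permutation matrix, is the one genuinely new ingredient compared with the leg-free free-covering case of~\cite{2011Terras}, and it is what produces the split factor $(1+u)^{l_{\rho,+}}(1-u)^{l_{\rho,-}}$. Specializing to $\rho=1_G$ recovers Theorem~\ref{thm:3term}, which provides a useful consistency check.
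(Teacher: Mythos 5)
Your proposal is correct and follows essentially the same route as the paper's proof: it Artinizes Bass's two-by-two block-matrix factorization, starting from the two-term formula of Proposition~\ref{prop:2termL}, and evaluates the residual $\det(I_{kd}+J_\rho u)$ determinant edge-by-edge and leg-by-leg using $F(\oh)F(h)=1$ and the $\pm1$-eigenvalue decomposition of each $\rho(F(l))$. The only difference is bookkeeping: you absorb the Frobenius-recording diagonal (the paper's $R_\rho$) into $D_\rho$ and $J_\rho$ and work with $U_\rho,V_\rho$ in place of the paper's $R_\rho\,{}^tT_\rho$ and ${}^tS_\rho$, whereas the paper keeps $R_\rho$ as a separate factor and retains the tensor forms $S_\rho=S\otimes I_d$, etc.
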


\begin{proof} Our proof closely follows the proof of the corresponding result for $L$-functions of free coverings (see pp.~170-173 in~\cite{2011Terras}), and consists in introducing a block-diagonal matrix that records the vertex stabilizers.

We recall the auxiliary matrices $S$, $T$, and $J$ from the proof of Prop.~\ref{thm:3term}, and introduce the corresponding {\it Artinized matrices}
$$
S_{\rho}=S\otimes I_d,\quad T_{\rho}=T\otimes I_d,\quad J_{\rho}=J\otimes I_d.
$$
We also consider the block-diagonal $kd\times kd$ matrices $D_{\rho}$ and $R_{\rho}$:
$$
(D_{\rho})_{hh'}=\delta_{hh'}c_{\rho}(h'),\quad (R_{\rho})_{hh'}=\delta_{hh'}\rho (F(h')).
$$
It is readily verified that these matrices satisfy the following properties:
$$
S_{\rho}J_{\rho}=T_{\rho},\quad T_{\rho}J_{\rho}=S_{\rho},\quad {}^tJ_{\rho}=J_{\rho},\quad W_{\rho}+R_{\rho} J_{\rho}=R_{\rho}{}^tT_{\rho}S_{\rho}D_{\rho},\quad (R_{\rho}J_{\rho})^2=I_{kd},
$$
$$
S_{\rho}D_{\rho}=C_{\rho}S_{\rho},\quad Q_{\rho}=S_{\rho} {}^t S_{\rho}, \quad A_{\rho}=S_{\rho}R_{\rho}{}^tT_{\rho}.
$$
Therefore we have
$$
C_{\rho}Q_{\rho}=S_{\rho}D_{\rho}{}^tS_{\rho},\quad C_{\rho}A_{\rho}=S_{\rho}D_{\rho}R_{\rho}{}^tT_{\rho}.
$$
A direct calculation verifies the following matrix identity:
$$
\left(\begin{array}{cc} I_{nd} & 0 \\ R_{\rho}{}^tT_{\rho} & I_{kd} \end{array}\right)
\left(\begin{array}{cc} I_{nd}(1-u^2) & S_{\rho}D_{\rho}u \\ 0 & I_{kd}-W_{\rho}u \end{array}\right)=
$$
$$
=\left(\begin{array}{cc} I_{nd}-S_{\rho}D_{\rho}R_{\rho}{}^tT_{\rho}u+(S_{\rho}D_{\rho}{}^tS_{\rho}-I_{nd})u^2 & S_{\rho}D_{\rho}u \\ 0 & I_{kd}+R_{\rho}J_{\rho} u \end{array}\right)
\left(\begin{array}{cc} I_{nd} & 0 \\ R_{\rho} {}^tT_{\rho} -{}^tS_{\rho} u & I_{kd} \end{array}\right).
$$
Taking determinants and using Prop.~\ref{prop:2termL}, we see that
$$
(1-u^2)^{nd}L(u,\rho,Y/\XX)^{-1}=\det( I_{nd}-S_{\rho}D_{\rho}R_{\rho}{}^tT_{\rho}u+(S_{\rho}D_{\rho}{}^tS_{\rho}-I_{nd})u^2)\det(I_{kd}+R_{\rho}J_{\rho}u).
$$
To complete the proof, we need to compute the determinant $\det(I_{kd}+R_{\rho}J_{\rho}u)$. Ordering the $k=2m+l$ half-edges of $X$ as in the proof of Thm.~\ref{thm:3term}, the matrices $J_{\rho}$ and $R_{\rho}$ have the following form:
$$
J_{\rho}=\left(\begin{array}{ccc} 0 & I_{md} & 0 \\ I_{md} & 0 & 0 \\ 0 & 0 & I_{ld} \end{array}\right),\quad
R_{\rho}=\left(\begin{array}{ccc} U & 0 & 0 \\ 0 & U^{-1} & 0 \\ 0 & 0 & V \end{array}\right).
$$
Therefore
$$
\det(I_{kd}+R_{\rho}J_{\rho}u)=\left|\begin{array}{ccc} I_{md} & Uu & 0 \\ U^{-1}u & I_{md} & 0 \\ 0 & 0 & I_{ld}+Vu \end{array}\right|=\left|\begin{array}{cc} I_{md} & Uu \\ U^{-1}u & I_{md} \end{array}\right|\det(I_{ld}+Vu).
$$
The first determinant can be computed using the identity
$$
\left(\begin{array}{cc} I_{md} & 0 \\ -U^{-1}u & I_{md} \end{array}\right)
\left(\begin{array}{cc} I_{md} & Uu \\ U^{-1}u & I_{md} \end{array}\right)=
\left(\begin{array}{cc} I_{md} & Uu \\ 0 & I_{md}(1-u^2) \end{array}\right)
$$
and is equal to
$$
\left|\begin{array}{cc} I_{md} & Uu \\ U^{-1}u & I_{md} \end{array}\right|=(1-u^2)^{md}.
$$
For the second determinant, we note that $\det(I_{ld}u-V)$ is the product of the characteristic polynomials of the $\rho(F(l))$ over all $l\in L(X)$ and so is equal to $(u-1)^{l_{\rho,+}}(u+1)^{l_{\rho,-}}$. Therefore,
$$
\det(I_{ld}+Vu)=\det\left(-u\left(-\frac{1}{u}I_{ld}-V\right)\right)=(-u)^{ld}\left(-\frac{1}{u}-1\right)^{l_{\rho,+}}\left(-\frac{1}{u}+1\right)^{l_{\rho,-}}=(1+u)^{l_{\rho,+}}(1-u)^{l_{\rho,-}}.
$$
Putting this together, and replacing all auxiliary matrices with $A_{\rho}$, $Q_{\rho}$, and $C_{\rho}$, we obtain the proof.

\end{proof}

To illustrate our results, we recall the zeta function of the tetrahedron graph $K_4$:
$$
\zeta(u,K_4)^{-1}=(1-u)^3(1+u)^2(1-2u)(1+u+2u^2)^3.
$$
We calculate the $L$-functions of the two quotients of $K_4$ that we considered in Ex.~\ref{ex:K4modC3} and Ex.~\ref{ex:K4modA4}, and observe how in each case $\zeta(u,K_4)$ factors as a product of the $L$-functions over the irreducible representations of the Galois group.

\begin{example} Consider the quotient $\XX=K_4/C_3$ of Ex.~\ref{ex:K4modC3}. The group $C_3=\{1,g,g^2\}\subset A_4$, where $g=(234)$, has two nontrivial one-dimensional representations $\rho$ and $\rho^2$, defined by $\rho(g)=\zeta$, where $\zeta$ is a primitive cube root of unity. In the notation of Ex.~\ref{ex:K4modC3}, we find that
$$
\rho(F(h))=\rho(F(\oh))=1,\quad \rho(F(h'))=\zeta,\quad \rho(F(\oh'))=\zeta^2, \quad c_{\rho}(u)=1+\zeta+\zeta^2=0,\quad c_{\rho}(v)=1,
$$
hence
$$
A_{\rho}=\left(\begin{array}{cc} 0 & 1 \\ 1 & -1\end{array}\right),\quad
Q_{\rho}=\left(\begin{array}{cc} 1 & 0 \\ 0 & 3 \end{array}\right),\quad
C_{\rho}=\left(\begin{array}{cc} 0 & 0 \\ 0 & 1 \end{array}\right).
$$
In addition, $b_1(X)=1$, and $l_{\rho,+}=l_{\rho,-}=0$ since there are no legs. The representation $\rho^2$ is the complex conjugate of $\rho$, hence it has the same matrices $A$, $Q$, and $C$. Therefore
$$
L(u,\rho,K_4/\XX)^{-1}=L(u,\rho^2,K_4/\XX)^{-1}=\left|\begin{array}{cc} 1-u^2 & 0 \\ -u & 1+u+2u^2 \end{array}\right|=(1-u)(1+u)(1+u+2u^2).
$$
We saw in Ex.~\ref{ex:K4} that $\zeta(u,\XX)^{-1}=(1-u)(1-2u)(1+u+2u^2)$, therefore
$$
\zeta(u,K_4)=\zeta(u,\XX)L(u,\rho,K_4/\XX)L(u,\rho^2,K_4/\XX),
$$
in accordance with the splitting $\rho_{C_3}=1\oplus \rho\oplus \rho^2$ of the regular representation of $C_3$.

\end{example}

\begin{example} We also consider the quotient $\XX=K_4/A_4$ of Ex.~\ref{ex:K4modA4}. The group $A_4$ has two nontrivial one-dimensional representations $\rho$ and $\rho^2$, defined by $\rho(g)=\zeta$, where $g=(234)$ and $\zeta$ is a primitive cube root of unity, and that are trivial on the Klein $4$-group $V_4\subset A_4$. Therefore
$$
\rho(F(l))=\rho((12)(34))=1,\quad c_{\rho}=1+\zeta+\zeta^2=0,\quad C_{\rho}=(0),\quad l_{\rho,+}=1,\quad l_{\rho,-}=0,
$$
hence
$$
L(u,\rho,K_4/\XX)^{-1}=L(u,\rho^2,K_4/\XX)^{-1}=1+u.
$$
Finally, let $\sigma$ be the irreducible three-dimensional representation of $A_4$, with basis $y_1=e_1-e_2$, $y_2=e_1-e_3$, $y_3=e_1-e_4$. We compute the matrices $A_{\sigma}$, $Q_{\sigma}$, and $C_{\sigma}$ in this basis:
$$
A_{\sigma}=\sigma (F(l))=\sigma ((12)(34))=\left(\begin{array}{ccc} -1 & -1 & -1 \\ 0 & 0 & 1 \\ 0 & 1 & 0\end{array}\right),\quad Q_{\sigma}=I_3,
$$
$$
C_{\sigma}=c_{\sigma}(u)=I_3+c_{\sigma}((234))+c_{\sigma}((243))=\left(\begin{array}{ccc} 1 & 1 & 1 \\ 1 & 1 & 1 \\ 1 & 1 & 1\end{array}\right).
$$
The matrix $\sigma(F(l))$ has one eigenvalue $+1$ and two eigenvalues $-1$, therefore $l_{\sigma,+}=1$ and $l_{\sigma,-}=2$, and $b_1(X)=0$. Hence we calculate that
$$
L(u,\sigma,K_4/\XX)^{-1}=(1-u^2)^{-3}(1+u)(1-u)^2\det(I_3-C_{\sigma}A_{\sigma}u+(C_{\sigma}Q_{\sigma}-I_3)u^2)=(1-u)(1+u+2u^2).
$$
Since $\zeta(u,\XX)^{-1}=1-2u$, we observe that
$$
\zeta(u,K_4)=\zeta(u,\XX)L(u,\rho,K_4/\XX)L(u,\rho^2,K_4/\XX)L(u,\sigma,K_4/\XX)^3,
$$
which agrees with the splitting $\rho_{A_4}=1\oplus \rho\oplus \rho^2\oplus 3\sigma$ of the regular representation of $A_4$.

\end{example}

\section{Acknowledgments}

The author would like to thank Scott Corry, Yoav Len and Martin Ulirsch for thoughtful discussions.



\end{document}